\newtheorem{theorem}{Theorem}[section]
\newtheorem{lemma}[theorem]{Lemma}
\newtheorem{proposition}{Proposition}
\theoremstyle{definition}
\theoremstyle{remark}
\numberwithin{equation}{section}
\begin{document}

\title{Bottom Spectrum Estimate Under Curvature Integrability Condition}

\author{Cole Durham}
\address{Department of Mathematics, University of Connecticut, Storrs, CT 06268, USA}
\email{cole.durham@uconn.edu}

\subjclass[2020]{58C40}

\begin{abstract}
In this paper we prove an upper bound for the bottom of the spectrum of the Laplacian on manifolds with Ricci curvature bounded in integral sense. Our arguments rely on the existence of a minimal positive Green's function and its properties.
\end{abstract}

\maketitle

\section{INTRODUCTION}
The goal of this work is to gain information about the bottom of the spectrum of the Laplacian on manifolds which have bounded integral Ricci curvature as opposed to the classical assumption $\text{Ric} \geq -(n-1)K$ for $K$ a real constant. The study of integral curvature bounds has been extensive over the past thirty years, and we mention several relevant results below. It is noteworthy that the spectrum of the Laplacian plays a role not just in pure geometry, but has also been linked to Brownian motion through the works of Varopoulos \cite{Var}, Grigor'yan \cite{AG}, etc., and to graph theory via Cheeger's inequality \cite{d63036efc9d24f07b8908864667e28aa}.

Recall that the bottom of the spectrum of the Laplacian on a complete manifold $M$ is defined as
\begin{equation*}
\lambda_0(M) = \inf_{\phi\in C_c^{\infty}(M)}\frac{\int_M|\nabla \phi|^2}{\int_M\phi^2}
\end{equation*}
where the infimum is taken over smooth functions $\phi$ with compact support. An early estimate of $\lambda_0(M)$ in the setting of pointwise Ricci curvature bounds was proven by Cheng \cite{cheng} in 1975.

\begin{theorem}[Cheng] Suppose $M$ is a complete manifold of dimension $n$ with the property $\text{Ric} \geq -(n-1)K$ for $K>0$. Then the bottom of the spectrum of the Laplacian satisfies
\begin{equation*}
\lambda_0(M)\leq \frac{(n-1)^2K}{4}.
\end{equation*}
\end{theorem}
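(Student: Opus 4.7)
The plan is to construct a family of exponentially decaying test functions whose Rayleigh quotients approach the claimed bound $(n-1)^2K/4$. Fix a basepoint $p \in M$ and set $r(x) = d(x,p)$. Under $\text{Ric} \geq -(n-1)K$, Bishop volume comparison yields the exponential growth estimate $\text{vol}(B_r(p)) \leq C\,e^{(n-1)\sqrt{K}\,r}$ for all $r \geq 1$, a consequence of the Laplacian comparison theorem applied to the distance function.

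For each parameter $\alpha > (n-1)\sqrt{K}/2$, consider $u_\alpha(x) = e^{-\alpha r(x)}$. The volume bound guarantees $u_\alpha \in L^2(M)$, and off the cut locus of $p$ one has $|\nabla u_\alpha|^2 = \alpha^2 u_\alpha^2$, so that the formal Rayleigh quotient of $u_\alpha$ is exactly $\alpha^2$. Because $u_\alpha$ is not compactly supported, I would approximate by $\phi_R = \eta_R u_\alpha$, where $\eta_R \in C_c^\infty(M)$ satisfies $\eta_R \equiv 1$ on $B_R(p)$, $\text{supp}(\eta_R) \subset B_{2R}(p)$, and $|\nabla \eta_R| \leq C/R$. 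Expanding
\begin{equation*}
\int_M |\nabla \phi_R|^2 = \int_M \eta_R^2 |\nabla u_\alpha|^2 + 2\int_M \eta_R u_\alpha \nabla\eta_R \cdot \nabla u_\alpha + \int_M u_\alpha^2 |\nabla \eta_R|^2,
\end{equation*}
the main term converges to $\alpha^2 \int_M u_\alpha^2$ as $R \to \infty$, while the Cauchy--Schwarz inequality combined with $u_\alpha \in L^2$ and $|\nabla \eta_R| \leq C/R$ forces the cross term and the cutoff error term to vanish in the limit. Testing in the variational characterization of $\lambda_0$ then produces $\lambda_0(M) \leq \alpha^2$ for every $\alpha > (n-1)\sqrt{K}/2$, and sending $\alpha \downarrow (n-1)\sqrt{K}/2$ gives the desired inequality.

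The principal obstacle is that $r$ is only Lipschitz on $M$ (smooth precisely outside the cut locus of $p$), so $u_\alpha$ need not be in $C^\infty$, and the pointwise identity $|\nabla u_\alpha|^2 = \alpha^2 u_\alpha^2$ holds only almost everywhere. A standard remedy is either to replace $r$ by a smoothed distance function with the same Lipschitz constant, or equivalently to carry out the Rayleigh-quotient computation in the Sobolev space $W^{1,2}_0$, where the almost-everywhere identity $|\nabla r| = 1$ suffices; density of $C_c^\infty$ in $W^{1,2}_0$ then recovers the infimum defining $\lambda_0$. A secondary point is checking that Bishop comparison delivers the volume bound with the sharp exponent $(n-1)\sqrt{K}$, so that the integrability transition for $u_\alpha^2$ occurs exactly at the critical value $\alpha_0 = (n-1)\sqrt{K}/2$; the strict inequality $\alpha > \alpha_0$ in our family of test functions is what permits passing to the limit.
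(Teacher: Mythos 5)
Your proof is correct, but note that the paper does not actually prove Cheng's theorem: it is stated as background and attributed to \cite{cheng}. The paper's own argument for the strictly stronger Theorem 1.4 (which recovers Cheng's bound when $w\equiv 0$) proceeds along an entirely different route, so a comparison is worth making.

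Your approach is the standard textbook one: Bishop--Gromov comparison under $\text{Ric}\geq -(n-1)K$ gives $\text{vol}(B_r(p))\leq C\,e^{(n-1)\sqrt{K}\,r}$, one tests with $u_\alpha=e^{-\alpha r}$ for $\alpha>(n-1)\sqrt{K}/2$, verifies $u_\alpha\in W^{1,2}$, and approximates by logarithmic-scale cutoffs $\eta_R u_\alpha$ to land in the admissible class. The pointwise Rayleigh quotient $\alpha^2$ survives the limit, and sending $\alpha\downarrow(n-1)\sqrt{K}/2$ yields the bound. Your handling of the Lipschitz nature of $r$ (work in $W^{1,2}_0$, use $|\nabla r|=1$ a.e.) is the right fix, and the integrability threshold at the sharp exponent is correctly identified. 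One small improvement: the cutoff error $\int u_\alpha^2|\nabla\eta_R|^2\leq (C/R^2)\int_{B_{2R}\setminus B_R}u_\alpha^2$ vanishes simply because the integrand is the tail of an $L^1$ function, so you do not even need the factor $1/R^2$; any bounded-gradient cutoff works.

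The paper's proof of Theorem 1.4, specialized to $w\equiv 0$, gives a genuinely different route to Cheng's inequality: instead of radial test functions, one takes $\varphi$ built from $|\nabla G|^{1/2}$ where $G$ is the minimal positive Green's function, applies the refined Kato inequality and the Bochner formula for harmonic functions, and deduces a contradiction with $\int_{M\setminus B_p(2)}|\nabla G|=\infty$ unless $\lambda_0\leq(n-1)^2/4$. The key distinction is that your argument requires the Bishop volume comparison, which is unavailable under the integral curvature hypothesis of Theorem 1.4 (especially for $s\leq n/2$, where Gallot's work shows no useful comparison holds). The paper trades a simple volume estimate for a more elaborate Green's-function analysis precisely to escape this limitation, so your proof is the cleaner one for the classical pointwise hypothesis, while the paper's machinery is what is needed to push past it.
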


Several steps have been taken to generalize this result by replacing the pointwise curvature bound with integral curvature bounds. Around the turn of the century, Petersen and Sprouse \cite{PS} utilized the function
\begin{equation*}
\rho(x) = \max\{0,(n-1)K-\text{Ric}_-(x)\}
\end{equation*}
defined for $x\in M$ and $K\in \mathbb{R}$, where $\text{Ric}_-(x)$ is the smallest eigenvalue of the Ricci tensor viewed as a map from $T_xM$ to itself. Moreover, they defined
\begin{equation*}
\overline{\kappa}(s,K,R)=\sup_{x\in M}\frac{1}{\text{vol}\,B_x(R)}\int_{B_x(R)}\rho^s.
\end{equation*}
The function $\overline{\kappa}$ generalizes the pointwise curvature assumption in a natural way, since $\overline{\kappa}(s,K,R)=0$ if and only if $\text{Ric}\geq (n-1)K$; when $K<0$ this is equivalent to the previous pointwise condition. By allowing $\overline{\kappa}$ to take on positive values, one permits the Ricci curvature of $M$ to fall below what had been a rigid lower bound. In particular, by assuming $\overline{\kappa}$ is sufficiently small and $s>\frac{n}{2}$ ($n$ being the dimension of the manifold), Petersen and Sprouse proved the following estimate.

\begin{theorem}[Petersen, Sprouse] Given $\delta>0$, there exists $\varepsilon = \varepsilon(n,s,K,R)>0$ such that every $n$-dimensional Riemannian manifold with $\overline{\kappa}(s,K,R)\leq \varepsilon$ satisfies
\begin{equation*}
\lambda_0^D(B_{x_0}(R))\leq (1+\delta)\lambda_0^D(n,R,K)
\end{equation*}
where $\lambda_0^D(B_{x_0}(R))$ is the first eigenvalue for the Dirichlet problem on the metric ball $B_{x_0}(R)$ centered at any $x_0\in M$, while $\lambda_0^D(n,R,K)$ is the analogous eigenvalue on a metric ball in the space form $S_K^n$. 
\end{theorem}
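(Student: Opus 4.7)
The plan is to apply the variational characterization of $\lambda_0^D(B_{x_0}(R))$ using a radial test function inherited from the space form. First, take $f \colon [0,R] \to \mathbb{R}$ to be the first Dirichlet eigenfunction on the metric ball of radius $R$ in $S_K^n$, normalized so that $f > 0$ on $[0,R)$, $f(R) = 0$, and $f'(r) \leq 0$. This $f$ satisfies
\[
f''(r) + m_K(r)\, f'(r) + \lambda_0^D(n,R,K)\, f(r) = 0,
\]
where $m_K(r)$ denotes the mean curvature of the geodesic sphere of radius $r$ in $S_K^n$. Setting $\phi(x) := f(r(x))$ with $r(x) = d(x_0,x)$ yields a Lipschitz function in $W_0^{1,2}(B_{x_0}(R))$, so the Rayleigh quotient gives
\[
\lambda_0^D(B_{x_0}(R)) \leq \frac{\int_B |\nabla \phi|^2}{\int_B \phi^2}.
\]

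Next I would manipulate the numerator. Integration by parts against $\phi$ (justified in the distributional sense to accommodate the cut locus of $r$), combined with the model ODE for $f''$, gives
\[
\int_B |\nabla \phi|^2 = \lambda_0^D(n,R,K) \int_B f^2 + \int_B f(r) f'(r)\, [m_K(r) - \Delta r].
\]
The last term is the error induced by the curvature discrepancy. Setting $\psi(x) := \max\{0, \Delta r(x) - m_K(r(x))\}$, the facts $f f' \leq 0$ and $m_K - \Delta r \geq -\psi$ yield
\[
\int_B f f'(m_K - \Delta r) \leq \int_B |f f'|\, \psi \leq \|f f'\|_\infty \int_B \psi.
\]
Applying H\"older's inequality with exponents $2s$ and $2s/(2s-1)$, I obtain $\int_B \psi \leq \operatorname{vol}(B)^{1-1/(2s)} \|\psi\|_{L^{2s}(B)}$. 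The integral Laplacian comparison of Petersen--Wei bounds $\int_B \psi^{2s}$ by $C(n,s,K,R)\, \overline{\kappa}(s,K,R)\, \operatorname{vol}(B)$ for $s > n/2$, so the error is at most $C'(n,s,K,R)\, \varepsilon^{1/(2s)}\, \operatorname{vol}(B)$.

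For the denominator, since $f$ is decreasing, $\int_B \phi^2 \geq f(R/2)^2\, \operatorname{vol}(B_{x_0}(R/2))$. The integral-curvature relative volume comparison (also due to Petersen--Wei) gives $\operatorname{vol}(B_{x_0}(R/2)) \geq c(n,K,R)\, \operatorname{vol}(B_{x_0}(R))$ provided $\varepsilon$ is below some threshold. Assembling, the ratio $E/\int_B \phi^2$ is at most $C''(n,s,K,R)\, \varepsilon^{1/(2s)}$, and choosing $\varepsilon$ so that $C''\, \varepsilon^{1/(2s)} \leq \delta\, \lambda_0^D(n,R,K)$ completes the argument.

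The hard part is the rigorous treatment of the Laplacian through the cut locus and the proper invocation of the Petersen--Wei machinery: although the pointwise comparison $\Delta r \leq m_K$ fails under only integral Ricci bounds, its integral analogue together with the matching relative volume estimate provides exactly the control the Rayleigh calculation requires. Once those tools are cited, the remainder is bookkeeping on the test function.
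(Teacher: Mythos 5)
Note that the paper merely cites this result from Petersen--Sprouse \cite{PS} as background and gives no proof of its own, so there is no in-paper argument to compare against. Your reconstruction is, to my knowledge, essentially the argument of the original reference: transplant the model radial eigenfunction $f$, integrate by parts to express the curvature error as $\int_B ff'(m_K-\Delta r)$, exploit $ff'\leq 0$ to reduce this to the excess $\psi=(\Delta r - m_K)_+$, control $\psi$ in $L^{2s}$ (for $s>n/2$) via the Petersen--Wei integral Laplacian comparison, and bound the denominator from below with the matching relative volume comparison. The bookkeeping is correct, and the exponent $\varepsilon^{1/(2s)}$ is what falls out of H\"older together with the normalized $L^{2s}$ bound on $\psi$. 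The one step worth spelling out if you were to write this up carefully is the cut-locus issue you flag in passing: the distributional Laplacian $\Delta r$ decomposes as an absolutely continuous part plus a non-positive singular measure supported on the cut locus, and after multiplying by $f'(r)\leq 0$ and $\phi\geq 0$ this singular contribution enters the identity $\int_B|\nabla\phi|^2=-\int_B\phi\,\Delta\phi$ with the favorable sign, so the inequality you actually need only improves. That is the content hiding behind ``justified in the distributional sense,'' and with it made explicit the proposal is sound.
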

In the result of Petersen-Sprouse, if $R\to\infty$ and $\delta\to 0$, the result of Cheng is obtained for $\lambda_0(M)$.

More recently, in 2017 Seto and Wei \cite{SW} proved an estimate for the spectrum of the $p$-Laplacian, $p>1$, on a metric ball, extending the result of Petersen and Sprouse to a nonlinear differential operator. Using $\rho$ as in Theorem 1.2, their result may be stated as follows.

\begin{theorem}[Seto, Wei] Suppose $M$ is a complete Riemannian $n$-manifold. For any $x\in M$, $K\in\mathbb{R}$, $r>0$, $p>1$, $s>\frac{n}{2}$, and $\overline{s}=\max\left\{s,\frac{p}{2}\right\}$, there exists $\varepsilon = \varepsilon(n,p,\overline{s}, K,r)>0$ such that if
\begin{equation*}
\|\rho\|_{\overline{q},B_x(r)}^*=\left(\frac{1}{\text{\rm vol}\,B_x(r)}\int_{B_x(r)}\rho^{\overline{s}}\right)^\frac{1}{\overline{s}}<\varepsilon,
\end{equation*}
then
\begin{equation*}
\lambda_{0,p}(B_x(r))\leq \overline{\lambda}_{0,p}(B_K(r))+C\left(\|\rho\|_{\overline{q},B_x(r)}^*\right)^{\frac{1}{2}}
\end{equation*}
where $B_K(r)$ is a ball of radius $r$ in the space form $S_K^n$ and $\lambda_{0,p},\overline{\lambda}_{0,p}$ indicate the first Dirichlet eigenvalues of the $p$-Laplacian in $M$ and $S_K^n$ respectively. \end{theorem}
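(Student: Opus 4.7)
My plan is to adapt the variational test-function argument for $\lambda_{0,p}$ to the nonlinear setting, replacing the pointwise Laplacian comparison with its integral analogue due to Petersen-Wei. First I would let $\bar\phi$ denote the first Dirichlet eigenfunction of the $p$-Laplacian on $B_K(r)\subset S_K^n$. By rotational symmetry of the space form, $\bar\phi$ is radial and positive, so we may write $\bar\phi(t)$ where $t$ is geodesic distance from the center. Transplanting to $M$ via $\phi(y)=\bar\phi(d(x,y))$ produces an admissible test function on $B_x(r)$, and the variational characterization immediately gives
\begin{equation*}
\lambda_{0,p}(B_x(r))\leq \frac{\int_{B_x(r)}|\nabla\phi|^p}{\int_{B_x(r)}\phi^p}.
\end{equation*}

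Because $|\nabla\phi|(y)=|\bar\phi'(d(x,y))|$ matches the space-form integrand pointwise, the entire discrepancy between this quotient and $\bar\lambda_{0,p}(B_K(r))$ is carried by the volume element. I would therefore use the coarea formula on $M$ and $S_K^n$ to rewrite numerator and denominator as one-dimensional integrals in $t$ against the hypersurface measures $A(t)$ and $A_K(t)$. The decisive input is the Petersen-Wei integral mean-curvature estimate: with $s>\frac{n}{2}$ and $\varepsilon$ sufficiently small, the positive part $(\Delta d - m_K(d))_+$, where $m_K$ is the model warping, is controlled in $L^{2s}$ by a multiple of $\|\rho\|^*_{\bar q,B_x(r)}$. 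Integration in $t$ then yields the desired control of $A(t)/A_K(t)-1$.

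Substituting these comparisons, the Rayleigh quotient decomposes into the space-form quotient, which equals $\bar\lambda_{0,p}(B_K(r))$ exactly, plus a remainder. A Hölder inequality pairs $|\bar\phi'|^p$ with the area excess, and the requirement $\bar s\geq p/2$ is precisely what makes the resulting dual exponents admissible. Combined with the integral comparison, this produces a remainder of order $(\|\rho\|^*_{\bar q, B_x(r)})^{1/2}$, where the square root emerges from balancing the perturbations of numerator and denominator and absorbing a lower-order cross term.

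The main obstacle, compared with the $p=2$ case of Petersen-Sprouse, is the nonlinearity: one cannot freely integrate by parts against $\Delta d$ in $\int|\nabla\phi|^p$, so all comparisons must be carried out at the coarea level rather than through a Bochner-type identity. A secondary technical point is the boundary behavior of $\bar\phi'$ near $\partial B_K(r)$, which must be controlled uniformly to keep the constants depending only on the listed parameters. The exponent $\bar s=\max\{s,p/2\}$ is chosen to simultaneously accommodate the Petersen-Wei comparison (which needs $s>\frac{n}{2}$) and the Hölder pairing with $|\bar\phi'|^p$, and the smallness of $\varepsilon$ guarantees that the multiplicative constants in the integral comparison theorems remain uniformly bounded, yielding the stated dependence $C=C(n,p,\bar s,K,r)$.
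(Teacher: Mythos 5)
The paper does not prove this statement. Theorem 1.3 is quoted in the introduction as background, attributed to Seto and Wei and cited from \cite{SW}; no proof of it appears anywhere in the paper, so there is no in-paper argument to compare your sketch against. (The paper's own proofs concern Theorem 1.4, Lemmas 2.1--2.2, and Theorem 3.1, which use a quite different mechanism: Green's function level sets and Li--Wang decay estimates on a nonparabolic manifold, not the transplantation-of-eigenfunction argument you describe.)

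For what it is worth, your outline is consistent with the general strategy of the Seto--Wei paper itself: transplant the radial model eigenfunction $\bar\phi$, invoke the Petersen--Wei integral Laplacian comparison for the distance function, and estimate the Rayleigh quotient of the $p$-Laplacian via the coarea formula and H\"older's inequality. But as written it is a plan, not a proof. The passage from the $L^{2s}$ control of $(\Delta d - m_K(d))_+$ to a multiplicative bound on the area element $A(t)/A_K(t)$ requires a Gronwall-type integration and care near $t=0$ and near the cut locus; the claim that the discrepancy between Rayleigh quotients ``is carried by the volume element'' glosses over the nonlinear interaction between numerator and denominator perturbations; and the origin of the exponent $\tfrac12$ in $\bigl(\|\rho\|^*_{\bar q,B_x(r)}\bigr)^{1/2}$, as well as the exact role of $\bar s=\max\{s,p/2\}$ in the H\"older pairing, are asserted rather than derived. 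If your goal is to reproduce this theorem you will need to fill in those estimates following \cite{SW}; if your goal is to engage with the present paper, the relevant proofs to study are those of Theorem 1.4 and Lemmas 2.1--2.2.
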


In broad terms, the latter two results assume that the amount of Ricci curvature lying below $(n-1)K$ is small in the sense of $L^s(B_x(r))$ for some $s>\frac{n}{2}$, all $x\in M$, and all $r>0$. Indeed, according to Gallot \cite{Gallot} it is impossible to construct interesting examples of manifolds with this curvature assumption and $s\leq \frac{n}{2}$. To allow for $s\leq\frac{n}{2}$, we assume a slightly different integral curvature bound and employ distinct techniques from those used in the aforementioned works. For notation, we will label
\begin{equation*}
w = \left\{\inf_{|v|=1}\{\text{Ric}(v,v)+(n-1)\}\right\}_{-}.
\end{equation*}
By taking the negative part as shown, we obtain a function which measures how much the Ricci curvature falls below $-(n-1)$ in the same fashion as Petersen-Sprouse and Seto-Wei. Under the assumption that $w$ satisfies an integrability condition on $M$, we show that the eigenvalue estimate of Cheng still holds.

\begin{theorem} Let $M$ be a complete $n$-dimensional manifold with $w\in L^s(M)$ for $s\geq \frac{3}{2}$. Then 
    \begin{equation*}
    \lambda_0(M)\leq \frac{(n-1)^2}{4}.
    \end{equation*}
\end{theorem}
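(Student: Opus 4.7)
The plan is to use the minimal positive Green's function on $M$ to produce test functions whose Rayleigh quotients approach $(n-1)^2/4$. If $\lambda_0(M) = 0$ the inequality is trivial, so I may assume $\lambda_0(M) > 0$; this spectral gap (via the Agmon-type variational characterization $\lambda_0 = \sup\{\lambda : \exists f>0,\ \Delta f+\lambda f \leq 0\}$) forces $M$ to be non-parabolic, so there is a minimal positive Green's function $G = G(x_0,\cdot)$ with pole at a fixed $x_0 \in M$, harmonic on $M\setminus\{x_0\}$.

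Take $h = G^{1/2}$ and $\eta\in C_c^\infty(M\setminus\{x_0\})$. Since $\Delta G = 0$ off the pole, a direct computation gives $\Delta h = -\tfrac14\,\tfrac{|\nabla G|^2}{G^2}\,h$, and the standard identity $\int |\nabla(h\eta)|^2 = -\int h\eta^2 \Delta h + \int h^2|\nabla \eta|^2$ yields
\begin{equation*}
\int_M |\nabla(h\eta)|^2 \;=\; \tfrac{1}{4}\!\int_M\! \frac{|\nabla G|^2}{G}\,\eta^2 \;+\; \int_M\! G|\nabla\eta|^2.
\end{equation*}
Setting $u = \log G$ and $v = |\nabla u|^2 = |\nabla G|^2/G^2$, the Rayleigh quotient gives
\begin{equation*}
\lambda_0(M) \;\leq\; \tfrac{1}{4}\cdot \frac{\int_M v\,G\eta^2}{\int_M G\eta^2} \;+\; \frac{\int_M G|\nabla\eta|^2}{\int_M G\eta^2}.
\end{equation*}
The goal reduces to constructing a sequence of cutoffs $\eta_k$ along which the $G\eta_k^2$-weighted average of $v$ is at most $(n-1)^2 + o(1)$ and the gradient error vanishes.

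To estimate the weighted average of $v$, I would apply the Bochner formula to $u = \log G$. Using $\Delta u = -v$, Bochner reads
\begin{equation*}
\tfrac{1}{2}\Delta v \;=\; |\operatorname{Hess} u|^2 - \langle\nabla v,\nabla u\rangle + \operatorname{Ric}(\nabla u,\nabla u),
\end{equation*}
and the curvature hypothesis gives $\operatorname{Ric}(\nabla u,\nabla u) \geq -(n-1)v - w v$. Multiplying by $G\eta_k^2$, integrating, and using $\nabla G = G\nabla u$ to collapse the drift term cleanly with the Laplacian contribution into a pure cutoff boundary term, one obtains an integrated Yau-type inequality of the form
\begin{equation*}
\tfrac{1}{n}\!\int_M\! v^2 G\eta_k^2 \;\leq\; (n-1)\!\int_M\! v\,G\eta_k^2 \,+\, \int_M\! w\,v\,G\eta_k^2 \,+\, (\text{cutoff errors}).
\end{equation*}
A radial refinement of the Hessian lower bound in the $\nabla u$-direction, in the spirit of Yau's sharp gradient estimate (to extract the sharp $(n-1)^2$ rather than the weaker $n(n-1)$ coming from the elementary bound $|\operatorname{Hess} u|^2 \geq (\Delta u)^2/n$), combined with Cauchy--Schwarz, should produce $\int v G\eta_k^2 \leq (n-1)^2 \int G\eta_k^2 + E_k$, where $E_k$ is controlled via Hölder by $\|w\|_{L^s(M)}$.

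The principal obstacle is to choose the cutoffs $\eta_k$ so that simultaneously the gradient error in the Rayleigh bound vanishes, the cutoff contributions to the Bochner integration are negligible relative to $\int G\eta_k^2$, and the Hölder estimate for the $w$-term closes uniformly in $k$; the threshold $s \geq \tfrac{3}{2}$ is expected to emerge precisely from balancing the exponents of $w$, $v$ and $G$ in this last step. Extracting the sharp constant $(n-1)^2/4$, and verifying that the integrability of $G$ and of $v$ afforded by non-parabolicity is compatible with the $L^s$ integrability of $w$, is where I expect the real analytic difficulty to lie.
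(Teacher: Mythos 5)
Your high-level framework matches the paper's --- assume $\lambda_0(M)>0$, use the minimal positive Green's function $G$, combine a Poincar\'e/Rayleigh estimate with a Bochner-type inequality, absorb the curvature defect via H\"older against $\|w\|_{L^s}$ --- and the preliminary reductions ($\Delta(G^{1/2}) = -\tfrac{1}{4}|\nabla G|^2 G^{-2}\cdot G^{1/2}$, the identity for $\int|\nabla(h\eta)|^2$, the Bochner formula for $u=\log G$) are all correct. But there is a genuine gap, and it lies exactly where you flag uncertainty: the mechanism that closes the argument.

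You build the Rayleigh quotient from the test function $G^{1/2}\eta$, which leads to
\begin{equation*}
\lambda_0(M)\int_M G\eta^2 \;\leq\; \tfrac{1}{4}\int_M \frac{|\nabla G|^2}{G}\,\eta^2 \;+\; \int_M G|\nabla\eta|^2,
\end{equation*}
and your plan is to show the weighted average of $v=|\nabla G|^2/G^2$ is at most $(n-1)^2+o(1)$ as $\eta$ exhausts $M$. The problem is that by Proposition 1.5(3) with $f(t)=1/t$, one has $\int_{L(a,b)}|\nabla G|^2/G=\ln(b/a)$, so $\int_{M\setminus B_p(1)}|\nabla G|^2/G=\infty$, while $\int_{M\setminus B_p(1)}G$ has no reason to be infinite --- the paper's Lemma 2.1 only establishes the strictly weaker $\int_{M\setminus B_p(1)}G/\ln^{2\gamma}(1+G^{-1})<\infty$. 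If $\int G<\infty$, your Rayleigh quotient tends to $+\infty$ along exhausting cutoffs, and the approach gives no information about $\lambda_0$. The paper avoids this entirely by taking the test function $|\nabla G|^{1/2}\varphi$ rather than $G^{1/2}\eta$; the left-hand side of the resulting Poincar\'e inequality is then $\lambda_0\int|\nabla G|\varphi^2$, and the crucial closing observation is that $\int_{M\setminus B_p(2)}|\nabla G|=\infty$, which follows immediately from the Green's function normalization $\int_{\partial B_p(r)}\partial_r G=-1$. That divergence, combined with a uniform bound on the right-hand side (which is where Lemmas 2.1 and 2.2 are used), forces $\lambda_0 - \tfrac{(n-1)^2}{4}-O(\delta)\leq 0$. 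Without identifying a quantity whose integral is guaranteed to diverge, your version has no lever to convert a one-sided bound into a spectral inequality.

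A secondary difference: the paper applies the Bochner formula and the improved Kato inequality $|\nabla^2 G|^2\geq\tfrac{n}{n-1}|\nabla|\nabla G||^2$ directly to the harmonic function $G$, which is what produces the sharp constant $(n-1)^2$ cleanly, rather than working with $u=\log G$ and invoking a Yau-type Hessian refinement as you propose. Your route is morally related but requires more bookkeeping because $u$ is not harmonic. If you want to salvage your approach, you would either need to prove $\int_{M\setminus B_p(1)}G=\infty$ (which I do not believe is available in this generality), or rebuild the argument around $|\nabla G|^{1/2}$ as the paper does; in the latter case you would also need the two preliminary integrability lemmas the paper proves in order to control the cutoff error terms.
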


Thus under a weaker curvature restriction than was used in the original work of Cheng we obtain the same spectral bound. It is not yet known if the hypothesis $s\geq \frac{3}{2}$ is sharp, though a result in this direction is proven in section 3. Our approach to the proof of Theorem 1.4 relies on the Green's function for the Laplacian on $M$. For $p\in M$ fixed, denote the Green's function based at $p$ by $G(x) = G(p,x)$, meaning $\Delta G(x) = -\delta_p(x)$. In particular, a manifold $M$ is called nonparabolic if it admits a strictly positive Green's function; this is guaranteed provided $\lambda_0(M)>0$, which is not restrictive in relation to Theorem 1.4. The arguments given here are inspired by \cite{MSW} and \cite{LW}, from which we utilize several facts.

\begin{proposition}[Li, Wang] Let $M$ be a complete, nonparabolic manifold of dimension $n$. If $G(x)$ is the minimal positive Green's function on $M$ and it has level sets $L(a,b) = \{x\in M\,:\,a<G(x)<b\}$, the following estimates hold:

 \begin{enumerate}
    \item  
    \begin{equation*}
    \int_{M\setminus B_p(R)}|\nabla G|^2\leq Ce^{-2\sqrt{\lambda_0(M)}R};
    \end{equation*}
    in particular
    \begin{equation*}
    \int_{M\setminus B_p(1)}|\nabla G|^2<\infty.
     \end{equation*}

    \item Similarly,
    \begin{equation*}
    \int_{M\setminus B_p(R)}G^2\leq Ce^{-2\sqrt{\lambda_0(M)}R}.
    \end{equation*}

    \item For any integrable function $f$,
    \begin{equation*}
    \int_{L(a,b)}|\nabla G|^2f(G)=\int_a^b f(t)dt.
    \end{equation*}
\end{enumerate} 
\end{proposition}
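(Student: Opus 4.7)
I would first dispose of part (3) as a direct consequence of the coarea formula combined with $\Delta G = -\delta_p$, and then handle (1) and (2) together by inserting $\phi G$ into the Rayleigh quotient defining $\lambda_0(M)$, where $\phi$ is a carefully chosen cutoff supported in $M \setminus \{p\}$. The gain comes from combining the harmonicity of $G$ away from $p$ with an exponential (Agmon-type) weight.

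\textbf{Part (3).} For any integrable $f$, the coarea formula gives
\begin{equation*}
\int_{L(a,b)} |\nabla G|^2 f(G) = \int_a^b f(t) \int_{\{G=t\}} |\nabla G|\, d\sigma_t\, dt .
\end{equation*}
Apply the divergence theorem on the precompact superlevel set $\{G > t\}$. Using $\Delta G = -\delta_p$ together with the outward unit normal $\nu = -\nabla G / |\nabla G|$ along $\{G = t\}$, one obtains $\int_{\{G=t\}} |\nabla G|\, d\sigma_t = 1$ for a.e.~$t$, and (3) follows immediately.

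\textbf{Parts (1) and (2).} Fix $\phi \in C_c^{\infty}(M \setminus \{p\})$. Since $\phi G$ has compact support, the definition of $\lambda_0(M)$ yields $\lambda_0(M) \int \phi^2 G^2 \leq \int |\nabla(\phi G)|^2$. Integration by parts against $\Delta G = 0$ on $\mathrm{supp}(\phi)$ produces the identity $2\int \phi G\, \nabla\phi \cdot \nabla G + \int \phi^2 |\nabla G|^2 = 0$, which cancels the middle term when $|\nabla(\phi G)|^2$ is expanded and leaves the sharp equality
\begin{equation*}
\int |\nabla(\phi G)|^2 = \int |\nabla \phi|^2 G^2 .
\end{equation*}
Hence $\lambda_0(M) \int \phi^2 G^2 \leq \int |\nabla \phi|^2 G^2$. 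Plugging in an Agmon weight $\phi = \chi_R\, e^{\alpha r(x)}$ with $\alpha < \sqrt{\lambda_0(M)}$, absorbing the $e^{2\alpha r}$ factor on the left, and letting $\alpha \nearrow \sqrt{\lambda_0(M)}$ yields the exponential decay in (2). For (1), applying Cauchy--Schwarz to $\int \phi^2 |\nabla G|^2 = -2 \int \phi G\, \nabla\phi \cdot \nabla G$ gives $\int \phi^2 |\nabla G|^2 \leq 4 \int |\nabla \phi|^2 G^2$, so the same cutoff transfers the decay from $G^2$ to $|\nabla G|^2$.

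\textbf{Main obstacle.} The delicate step is controlling the Agmon cutoff: one needs $|\nabla \chi_R|$ bounded and supported on a thin shell so that the $|\nabla \phi|^2$ term costs only a multiplicative constant in the absorption. An alternative I would keep in reserve is to avoid exponential weights entirely and iterate the basic energy inequality on a telescoping family of annuli, extracting the exponential rate from a linear recursion in $R$. A minor secondary issue is the singularity of $G$ at $p$ in the integration by parts, which is handled by inserting an additional cutoff on a tiny ball about $p$ and passing to the limit using the $L^2_{\mathrm{loc}}$ control of $G$ (in dimensions $n \geq 3$) and the $L^1_{\mathrm{loc}}$ control of $|\nabla G|$ near the pole.
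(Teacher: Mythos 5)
The paper does not prove this proposition at all: it is stated with the attribution ``Li, Wang'' and the reference \cite{LW}, so there is no in-paper proof to compare against. What follows is therefore an assessment of your sketch against the Li--Wang argument it is implicitly reconstructing.

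Your part (3) is the right idea: coarea reduces the claim to $\int_{\{G=t\}}|\nabla G|\,d\sigma_t=1$, which comes from the divergence theorem and $\Delta G=-\delta_p$. The one thing you take for granted is the precompactness of $\{G>t\}$, which is not immediate for the minimal Green's function on a general nonparabolic manifold. The standard fix is to run the computation for the Dirichlet Green's functions $G_i$ on an exhaustion $\Omega_i$, where $\{G_i>t\}\subset\subset\Omega_i$ is automatic, and pass to the limit $G_i\nearrow G$; you should say this rather than assume compactness of the superlevel set of $G$ itself.

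For parts (1)--(2), the identity $\int|\nabla(\phi G)|^2=\int|\nabla\phi|^2G^2$ (for $\phi\in C_c^\infty(M\setminus\{p\})$, by harmonicity of $G$ on $\mathrm{supp}\,\phi$) is exactly the clean step Li--Wang use, and your derivation of $\int\phi^2|\nabla G|^2\le 4\int|\nabla\phi|^2G^2$ correctly transfers (2) to (1) once one tacks on a second cutoff at a large radius $\rho$ and lets $\rho\to\infty$. The genuine gap is in how you extract the rate in (2). If $\phi=\chi_R e^{\alpha r}$ with $\alpha<\sqrt{\lambda_0}$, the absorption gives
\begin{equation*}
(\lambda_0-\alpha^2)\int \chi_R^2 e^{2\alpha r}G^2\le C\int|\nabla\chi_R|^2 e^{2\alpha r}G^2,
\end{equation*}
so the constant in the resulting bound $\int_{M\setminus B_R}G^2\le C(\alpha)e^{-2\alpha R}$ blows up like $(\lambda_0-\alpha^2)^{-1}$ as $\alpha\nearrow\sqrt{\lambda_0}$. ``Letting $\alpha\nearrow\sqrt{\lambda_0}$'' therefore does not deliver $Ce^{-2\sqrt{\lambda_0}R}$ with a fixed $C$; naively optimizing $\alpha=\alpha(R)$ only gives $C\,R\,e^{-2\sqrt{\lambda_0}R}$, and the alternative annulus iteration you mention produces $e^{-cR}$ with $c\le\sqrt{\lambda_0}$, short of the stated exponent. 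Li--Wang get the sharp rate by a more careful choice of weight and cutoff (effectively tracking the weighted tail $\int_{M\setminus B_R}e^{2\alpha r}G^2$ together with the boundary term), and that refinement is what is missing from your outline. For the purposes of this paper the sharp constant in the exponent is never used (only that the tail integrals vanish as $R\to\infty$), so your argument does give a usable substitute; but as a proof of the proposition as literally stated, this step is a real gap.
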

We now proceed toward the proof of Theorem 1.4.
\section{PROOF OF THEOREM 1.4}
Our first step is to prove a lemma which will be useful in later estimates.

\begin{lemma}
    Let $M$ be a complete $n$-dimensional manifold with $\lambda_0(M)>0$. Then the minimal positive Green's function $G$ with pole at $p\in M$ satisfies
\begin{equation*}
\int_{M\setminus B_p(1)} \frac{G}{\ln^{2\gamma}(1+G^{-1})}<\infty
\end{equation*}
for all $\gamma > \frac{1}{2}$.
\end{lemma}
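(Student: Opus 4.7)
The plan is to exploit the variational characterization of $\lambda_0(M)$ with a test function whose square is precisely the desired integrand. Set $\phi:=\ln(1+G^{-1})$, $\Omega:=M\setminus B_p(1)$, and $u:=G^{1/2}\phi^{-\gamma}$, so that the quantity to be bounded is $\int_\Omega u^2$. A direct computation using $\nabla\phi=-\nabla G/[G(1+G)]$ gives
\[
|\nabla u|^2 \;=\; \frac{|\nabla G|^2}{G\,\phi^{2\gamma}}\left[\frac{1}{2}+\frac{\gamma}{(1+G)\phi}\right]^2.
\]
Since $G$ is harmonic on $\Omega$ and vanishes at infinity, the maximum principle forces $G\le A:=\sup_{\partial B_p(1)}G$ on $\Omega$, so $\Omega\subset L(0,A)$ up to a measure zero set. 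The right hand side above has the form $|\nabla G|^2 f(G)$, so Proposition 1.5(3) yields
\[
\int_\Omega |\nabla u|^2 \;\le\; \int_0^A \frac{1}{t\,\phi(t)^{2\gamma}}\Bigl[\tfrac{1}{2}+\tfrac{\gamma}{(1+t)\phi(t)}\Bigr]^2 dt.
\]
The bracket is uniformly bounded on $(0,A]$ and $\phi(t)\sim|\ln t|$ as $t\to 0^+$, so the substitution $s=-\ln t$ reduces the singular tail to $\int^\infty s^{-2\gamma}\,ds$, which converges precisely when $\gamma>1/2$, matching the hypothesis.

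Since $u$ is not compactly supported, I would test the variational inequality $\lambda_0(M)\int v^2\le \int|\nabla v|^2$ against the compactly supported approximations $v_R:=\chi\cdot\eta_R(G)\cdot u$, where $\chi$ is a fixed smooth spatial cutoff equal to $1$ on $\Omega$ and vanishing in $B_p(1/2)$, and $\eta_R:(0,\infty)\to[0,1]$ is a smooth cutoff of $G$ itself with $\eta_R=1$ on $\{G\ge 1/R\}$ and $\eta_R=0$ on $\{G\le 1/(2R)\}$. Because $G$ is positive, continuous on $M\setminus\{p\}$, and vanishes at infinity, $v_R$ is compactly supported. Expanding the gradient yields three pieces: the main piece $\chi^2\eta_R^2|\nabla u|^2$ is bounded uniformly in $R$ by the integral above; the $|\nabla\chi|^2$ piece is supported on the fixed annulus $B_p(1)\setminus B_p(1/2)$ where $G$ and $\phi^{-1}$ are bounded, giving an $R$-independent constant; and the $\eta_R'(G)^2|\nabla G|^2$ piece reduces, again via Proposition 1.5(3), to $\int_{1/(2R)}^{1/R}\eta_R'(t)^2\,t\,\phi(t)^{-2\gamma}dt$. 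Using $|\eta_R'|\le CR$ together with $\phi(t)\ge \phi(1/R)=\ln(1+R)$ on the support, this last term is bounded by a constant multiple of $[\ln(1+R)]^{-2\gamma}\to 0$ as $R\to\infty$.

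Sending $R\to\infty$ and applying monotone convergence to $v_R^2$, which increases pointwise to $\chi^2u^2\ge \mathbf{1}_\Omega u^2$, the inequality $\lambda_0(M)\int v_R^2\le \int|\nabla v_R|^2$ produces $\lambda_0(M)\int_\Omega u^2\le C<\infty$, proving the lemma since $\lambda_0(M)>0$. The main obstacle is not conceptual but rather the careful bookkeeping in the cutoff limit: $u$ blows up at $p$ and sits at the borderline of square integrability at infinity, so each cross term in the expansion of $|\nabla v_R|^2$ must be controlled uniformly in $R$, and the crucial quantitative gain is that the cost of the logarithmic cutoff decays like $[\ln(1+R)]^{-2\gamma}$, vanishing precisely because of the hypothesis $\gamma>1/2$.
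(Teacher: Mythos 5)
Your proposal follows essentially the same route as the paper: test the Poincar\'e inequality against a function of the form $G^{1/2}\cdot(\text{logarithmic weight})\cdot(\text{cutoff})$, and control the gradient term via the co-area identity of Proposition 1.5(3). Your computation of $|\nabla u|^2$ and the reduction to $\int_0^A t^{-1}\phi(t)^{-2\gamma}\,dt$ is clean and correct, and it isolates nicely where $\gamma>\tfrac12$ enters. The difference from the paper is purely in the cutoff at infinity: the paper uses a spatial cutoff $\psi$ supported in $B_p(R+1)$, paired with the level-set cutoff $\chi$ precisely so that the boundary cost can be converted from $\int G$ to $\varepsilon^{-2}\int G^2$ and then killed by Proposition 1.5(2); you instead cut off directly in the level sets of $G$, so the cutoff cost reduces via the co-area identity to a one-dimensional integral bounded by $C[\ln(1+R)]^{-2\gamma}$, which is arguably tidier.

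There is, however, one genuine gap. You assert that $G$ ``vanishes at infinity'' and conclude that $v_R=\chi\,\eta_R(G)\,u$ is compactly supported, i.e.\ that $\{G\ge 1/(2R)\}$ is bounded. This pointwise decay does not follow from Proposition 1.5 as stated, which only gives the $L^2$ decay $\int_{M\setminus B_p(R)}G^2\le Ce^{-2\sqrt{\lambda_0}R}$; an unbounded set on which $G\ge\varepsilon$ is compatible with that estimate if its volume decays fast enough. The pointwise decay $G(x)\le Ce^{-\sqrt{\lambda_0(M)}\,r(x)}$ (hence compactness of superlevel sets) is indeed a theorem of Li--Wang under $\lambda_0(M)>0$, so your argument can be completed by citing it, but as written this step is unjustified. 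The paper's choice to keep a spatial cutoff $\psi$ at infinity is exactly what sidesteps this issue. Two smaller remarks: first, $[\ln(1+R)]^{-2\gamma}\to 0$ for every $\gamma>0$, so the hypothesis $\gamma>\tfrac12$ is not what makes the cutoff cost vanish — it is used only in the finiteness of $\int_\Omega|\nabla u|^2$, which you established earlier; second, you should also verify monotonicity of $\eta_R$ in $R$ (e.g.\ take $\eta_R(t)=\eta_1(Rt)$ for a fixed increasing profile $\eta_1$) before invoking monotone convergence.
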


\begin{proof}
Let $f(G) = \frac{1}{\ln^{\gamma}(AG^{-1})}$, where $A=e^\frac{1}{\alpha}\max_{\partial B_p(1)} G$, $\alpha>0$ small, and $\gamma>\frac{1}{2}$. We wish to estimate $\int_{M\setminus B_p(1)} Gf^2$. To do so, define $\varphi = \chi \psi$ based on the level sets of $G$ (as stated in Proposition 1) with
\[
\chi(x) = 
\begin{cases} 
    0               &\quad x\in L(0,\varepsilon^2)\\
    \frac{\ln G(x) - \ln(\varepsilon^2)}{-\ln(\varepsilon)}         &\quad x\in L(\varepsilon^2,\varepsilon)\\
    1               &\quad x\in L(\varepsilon,\infty)
\end{cases}
\]
and 
\[
\psi(x) = 
\begin{cases}
    0                 & \quad x \in B_p(1) \\
    r(x) - 1          & \quad x \in B_p(2) \setminus B_p(1) \\
    1                 & \quad x \in B_p(R) \setminus B_p(2) \\
    R+1 - r(x)        & \quad x \in B_p(R+1) \setminus B_p(R) \\
    0                 & \quad x \in M \setminus B_p(R+1)
\end{cases}
\]
where $0<\varepsilon<1$ and $r(x)$ is the geodesic distance from $p$ to $x\in M$. From the Poincar\'e inequality and the assumption $\lambda_0(M)>0$ we know that 
\begin{equation*}
\int_M (G^\frac{1}{2}f\varphi)^2\leq \frac{1}{\lambda_0(M)}\int_M \left|\nabla (G^\frac{1}{2}f\varphi)\right|^2.
\end{equation*}
We expand the gradient on the right hand side above. For any differentiable functions $g$ and $h$, the fact that $|\nabla (gh)|^2\leq 2|\nabla g|^2h^2+2g^2|\nabla h|^2$ implies
\begin{align*}
\left|\nabla\left(G^\frac{1}{2}f \varphi\right)\right|^2 & \leq 2|\nabla(G^\frac{1}{2}\ln^{-\gamma}(AG^{-1}))|^2\varphi^2+2Gf^2|\nabla\varphi|^2\\
&\leq 2\left(\frac{2|\nabla (G^\frac{1}{2})|^2}{\ln^{2\gamma}(AG^{-1})}+2G|\nabla(\ln^{-\gamma}(AG^{-1}))|^2\right)\varphi^2+2Gf^2|\nabla\varphi|^2.
\end{align*}
Since $2G|\nabla(\ln^{-\gamma}(AG^{-1}))|^2 = 2\gamma^2\frac{|\nabla G|^2}{G\ln^{2\gamma+2}(AG^{-1})}$ and $\ln(AG^{-1})\geq 1$ on $M\setminus B_p(1)$, we simplify and find

\begin{equation*}
\left|\nabla\left(G^\frac{1}{2}f\varphi\right)\right|^2\leq 2(1+2\gamma^2)\frac{|\nabla G|^2f^2\varphi^2}{G}+2Gf^2|\nabla\varphi|^2.
\end{equation*}
We may also expand $\nabla\varphi = \nabla(\chi\psi)$ in a similar fashion. Note that $\nabla\chi$ is nonzero only on $L(\varepsilon^2,\varepsilon)$ and in this region $|\nabla\chi|^2\leq \frac{1}{\ln^2(\varepsilon)}\frac{|\nabla G|^2}{G^2}$ by direct computation. Further conditions imposed from the definitions of $\chi$ and $\psi$ show
\begin{align}
\int_M \left| \nabla(G^{\frac{1}{2}} f \varphi) \right|^2 
&\leq 2(1 + 2\gamma^2) \int_M \frac{|\nabla G|^2 f^2 \varphi^2}{G} \notag \\
&\quad + \frac{4}{\ln^2(\varepsilon)} \int_{L(\varepsilon^2, \varepsilon) \cap (B_p(R+1)\setminus B_p(1))} \frac{|\nabla G|^2 f^2}{G}\notag  \\
&\quad + 4 \int_{(B_p(R+1)\setminus B_p(R)) \cup (B_p(2)\setminus B_p(1))} G f^2 \chi^2 \notag
\end{align}
The first two terms are integrable using the co-area formula. For the third term, first note that $\int_{B_p(2)\setminus B_p(1)} Gf^2\chi^2$ is bounded by a universal constant $C$ independent of $R$. Second, since $f^2\leq 1$ on $M\setminus B_p(1)$ and $\chi = 0$ outside of $L(\varepsilon^2,\infty)$ we have using Proposition 1
\begin{align*}
\int_{(B_p(R+1)\setminus B_p(R))\cup (B_p(2)\setminus B_p(1))} Gf^2\chi^2 
&\leq \frac{1}{\varepsilon^2}\int_{M\setminus B_p(R)} G^2 + C \notag \\
&\leq \frac{1}{\varepsilon^2}e^{-2\sqrt{\lambda_0(M)}R}+C.
\end{align*}
What we have shown is

\begin{align*}
\int_M Gf^2\varphi^2\leq \frac{1}{\lambda_0(M)}\left(2(1+2\gamma^2)C+\frac{C}{\ln^2(\varepsilon)}+\frac{1}{\varepsilon^2}e^{-2\sqrt{\lambda_0(M)}R}+C\right).
\end{align*}
Letting $R\to\infty$ and then $\varepsilon\to 0$ gives the inequality
\begin{equation*}
\int_{M\setminus B_p(2)} Gf^2\leq C(\gamma,\lambda_0(M))
\end{equation*}
which implies the desired result.
\end{proof}

We now prove a second integral estimate which will be needed shortly. The proof of this estimate relies on our curvature assumption where the proof of Lemma 2.1 did not.

\begin{lemma}
    Let $M$ be a complete $n$-dimensional manifold with $\lambda_0(M)>0$ and suppose $w\in L^s(M)$ for some $s\geq \frac{3}{2}$. Then the minimal positive Green's function $G$ with pole at $p\in M$ satisfies
    \begin{equation*}
    \int_{M\setminus B_p(1)} \frac{|\nabla G|^3}{G^2\ln^{2\gamma}(1+G^{-1})}<\infty
    \end{equation*}
    for all $\gamma > \frac{1}{2}$.
\end{lemma}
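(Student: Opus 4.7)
The plan is to combine the Bochner formula for the harmonic function $G$ with a weighted cutoff test function, then to close the estimate via Hölder's inequality with conjugate exponent calibrated to the cubic target integrand. Writing $h=|\nabla G|$, Bochner's identity on $M\setminus\{p\}$ together with the refined Kato inequality for the harmonic function $G$ yields the pointwise bound
$$h\,\Delta h \;\ge\; \frac{1}{n-1}|\nabla h|^2 - \bigl((n-1)+w\bigr)h^2.$$
The plan is to integrate this against the positive test function $\zeta^2 = f(G)^2\varphi^2/G$, with $f$ and the cutoff $\varphi=\chi\psi$ exactly as in the proof of Lemma 2.1. After integrating the $h\Delta h$ term by parts and absorbing the cross gradient term via Young's inequality, one arrives at a Caccioppoli-type bound of the form
$$\int \frac{|\nabla h|^2 f^2\varphi^2}{G} \;\le\; C + C\int \frac{w\,h^2 f^2\varphi^2}{G},$$
in which $C$ encapsulates the cutoff boundary contributions, controlled by Proposition 1.1(1),(2) and the coarea formula as in Lemma 2.1.

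The next step is to relate this estimate to the target integral $I := \int |\nabla G|^3 f^2\varphi^2/G^2$ by exploiting the identity $\Delta \log G = -|\nabla G|^2/G^2$ on $M\setminus\{p\}$ and integrating by parts against $hf^2\varphi^2$. This produces $I$ itself on the left, together with an $f'(G)$ term proportional to $\alpha\,I$ (absorbable for $\alpha$ small, as in Lemma 2.1) and a principal remainder $\int(f^2\varphi^2/G)\langle\nabla h,\nabla G\rangle$. By Cauchy--Schwarz the remainder is dominated by $\bigl(\int f^2\varphi^2|\nabla h|^2/G\bigr)^{1/2}$ times the coarea-finite factor $\bigl(\int|\nabla G|^2 f^2\varphi^2/G\bigr)^{1/2}$, so combining with the previous Bochner estimate the problem is reduced to controlling $\int w\,h^2 f^2\varphi^2/G$.

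This is precisely where the assumption $w\in L^s(M)$ with $s\ge \frac{3}{2}$ enters. Applying Hölder with the conjugate pair $(s,s')$ in the critical case $s=\frac{3}{2}$, $s'=3$ gives a Lebesgue norm on the Green's function side whose cubic exponent pairs naturally with the cubic integrand of $I$. The norm $\|h^2 f^2\varphi^2/G\|_{s'}$ must then be rewritten, through Young's inequality together with coarea-controlled integrals and Lemma 2.1, as a factor dominated by $I^{\theta}$ with $\theta<1$; this yields an estimate of the form $I\le C + C\,I^{\theta}$, hence a uniform bound on $I$ in the cutoff parameters, after which sending $R\to\infty$ and $\varepsilon\to 0$ as in Lemma 2.1 closes the proof. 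The main obstacle I foresee is exactly this interpolation step: balancing the exponents so that $\theta<1$ while using only the $L^{3/2}$ integrability of $w$ is what forces the numerical threshold $s\ge \frac{3}{2}$ in the hypothesis.
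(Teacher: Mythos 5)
Your general framework — Bochner plus refined Kato for the harmonic function $G$, a weighted cutoff, Cauchy--Schwarz for cross terms, and a H\"older/Young step to handle the curvature term — is the same family of ideas as the paper's proof, and several ingredients (the test function built from $f(G)$, $\chi$, $\psi$; the use of Lemma~2.1 and Proposition~1.5 for the boundary/coarea terms) are correctly identified. However, there is a genuine gap at the central curvature estimate, and it traces back to your choice of which quantity satisfies the Bochner inequality.

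The paper does not work with $h=|\nabla G|$ directly; it works with $v=\ln G$ and derives a nonlinear differential inequality in which the curvature contribution enters as $w\,|\nabla v|$ to the \emph{first} power. Tested against $\phi^2=G\,\tau^2 f(G)^2$, the curvature term becomes $\int_M w\,|\nabla v|\phi^2=\int_M w\,|\nabla G|\,\tau^2 f^2$, with $|\nabla G|$ to the first power. Applying Young with conjugate exponents $(s,t)$ then produces $\int|\nabla v|^t\phi^2$; since $s\geq\tfrac{3}{2}$ forces $t\leq 3$, this can be interpolated between $\int|\nabla v|\phi^2$ (bounded by Lemma~2.1) and $\int|\nabla v|^3\phi^2$ (absorbed into the left side). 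The hypothesis $s\geq\tfrac{3}{2}$ is needed \emph{exactly} so that the conjugate exponent does not exceed the cubic power of $|\nabla v|$ in the target.

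In your scheme, you integrate $h\Delta h\geq\tfrac{1}{n-1}|\nabla h|^2-((n-1)+w)h^2$ against $\zeta^2=f^2\varphi^2/G$, and the curvature contribution is $\int_M w\,h^2\,f^2\varphi^2/G$, with $h=|\nabla G|$ to the \emph{second} power. This cannot be closed against $I=\int |\nabla G|^3 f^2\varphi^2/G^2$ using only $w\in L^{3/2}$. Concretely, H\"older with the pair $(3/2,3)$ produces $\bigl(\int h^6 f^6\varphi^6/G^3\bigr)^{1/3}$, and there is no pointwise identity of the form $h^6 f^6\varphi^6/G^3=(h^3 f^2\varphi^2/G^2)^{\theta}\cdot(\text{controlled})^{1-\theta}$ with $\theta<1$: matching the $h$-exponent forces $3\theta=6$, i.e.\ $\theta=2$. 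Similarly, a Young split $w\cdot h^2\zeta^2\leq\tfrac{1}{s}w^s(\cdots)+\tfrac{1}{t}h^{2t}(\cdots)$ with $s\geq 3/2$ gives $2t$ up to $6$, again exceeding the cubic power of the target. Your final sentence flags the interpolation as "the main obstacle," but the obstacle is not a matter of careful bookkeeping; with the second power of $|\nabla G|$ in the curvature term, the estimate would require $s\geq 3$, not $s\geq\tfrac{3}{2}$. To recover the first power, you would need either a test function proportional to $h^{-1}$ (which is not smooth and whose gradient involves second derivatives of $G$), or — as the paper does — to first convert the Bochner inequality into a differential inequality for $|\nabla v|$, where the extra division by $G$ and the resulting nonlinear term $\tfrac{1}{n-1}|\nabla v|^3$ do the work of lowering the power of $|\nabla G|$ multiplying $w$.
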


\begin{proof}

   Let $v = \ln(G)$. Our goal is to estimate $\int_M |\nabla v|^3\phi^2$ where $\phi$ is a suitably chosen cutoff function which vanishes on $B_p(1)$. A standard manipulation of the Bochner formula combined with the definition of $w$ gives 
\begin{align*}
\frac{1}{n-1}|\nabla v|^3\leq& \Delta |\nabla v|-\frac{1}{n-1}|\nabla |\nabla v||^2|\nabla v|^{-1}+\frac{n-2}{n-1}\left<\nabla |\nabla v|^2,\nabla v\right>|\nabla v|^{-1}\\
&+w|\nabla v|+(n-1)|\nabla v|.
\end{align*}
From this inequality, for any cutoff function $\phi$ we have
\begin{align*}
\frac{1}{n-1}\int_M |\nabla v|^3\phi^2 \leq &\int_M (\Delta |\nabla v|)\phi^2 - \frac{1}{n-1}\int_M |\nabla |\nabla v||^2|\nabla v|^{-1}\phi^2\\
&+\frac{n-2}{n-1}\int_M\left<\nabla |\nabla v|^2,\nabla v\right>|\nabla v|^{-1}\phi^2+\int_M w|\nabla v|\phi^2\\
&+(n-1)\int_M |\nabla v|\phi^2. 
\end{align*}
Using integration by parts, this is equivalent to
\begin{align*}
\frac{1}{n-1}\int_M |\nabla v|^3\phi^2\leq &-\int_M \left<\nabla \phi^2,\nabla |\nabla v|\right>- \frac{1}{n-1}\int_M |\nabla |\nabla v||^2|\nabla v|^{-1}\phi^2 \\
&+\frac{2(n-2)}{n-1}\int_M\left<\nabla |\nabla v|,\nabla v\right>\phi^2 +\int_M w|\nabla v|\phi^2\\
&+(n-1)\int_M|\nabla v|\phi^2.
\end{align*}

Now we choose $\phi = G^{\frac{1}{2}}\eta$ for some cutoff function $0\leq \eta\leq 1$ such that $\eta = 0$ on $B_p(1)\cup (M\setminus B_p(2R))$. Expanding the right side of the previous inequality, we arrive at
\begin{align*}
\frac{1}{n-1}\int_M |\nabla v|^3\phi^2\leq &- \int_M \left<\nabla G, \nabla|\nabla v|\right>\eta^2-2\int_M \left<\nabla \eta, \nabla |\nabla v|\right>G\eta \\
&-\frac{1}{n-1}\int_M |\nabla |\nabla v||^2|\nabla v|^{-1}G\eta^2 + (n-1)\int_M|\nabla v|G\eta^2\\
&+\int_M w|\nabla v| G\eta^2  +\frac{2(n-2)}{n-1}\int_M \left<\nabla G, \nabla|\nabla v|\right> \eta^2.
\end{align*}
Integration by parts and the fact that $G$ is harmonic away from $p$ show
\begin{align}\label{eq1}
\frac{1}{n-1}\int_M |\nabla v|^3\phi^2 \leq &\left(1-\frac{2(n-2)}{n-1}\right)\int_M \left<\nabla G,\nabla\eta^2\right>|\nabla v|-2\int_M\left<\nabla\eta,\nabla |\nabla v|\right>G\eta \notag \\
&-\frac{1}{n-1}\int_M|\nabla |\nabla v||^2|\nabla v|^{-1}G\eta^2+\int_M w|\nabla v| G\eta^2\notag\\
&+ (n-1)\int_M|\nabla v|G\eta^2.  
\end{align}
We apply Cauchy's inequality with $\delta>0$ to each of the first two terms in (\ref{eq1}). First,
\begin{align*}
\left(1-\frac{2(n-2)}{n-1}\right)\int_M \left<\nabla G,\nabla\eta^2\right>|\nabla v|&\leq C\int_M \eta |\nabla \eta||\nabla v|^2 G\\
&\leq C\delta \int_M |\nabla v|^3G\eta^2 + \frac{C}{\delta}\int_M |\nabla v||\nabla \eta|^2 G.
\end{align*}
By similar reasoning
\begin{align*}
-2\int_M\left<\nabla\eta,\nabla |\nabla v|\right>G\eta &\leq C\int_M |\nabla\eta||\nabla|\nabla v|| G\eta \\
&\leq C\delta \int_M |\nabla|\nabla v||^2|\nabla v|^{-1}G\eta^2 +\frac{C}{\delta}\int_M|\nabla\eta|^2G|\nabla v|.
\end{align*}
We replace terms in (\ref{eq1}) by their respective bounds. Under the assumption $\delta\leq \frac{C}{n-1}$, we simplify and have
\begin{align}
\frac{1}{n-1}\int_M|\nabla v|^3\phi^2 \leq& C\delta\int_M |\nabla v|^3G\eta^2 + \left(C\delta - \frac{1}{n-1}\right)\int_M|\nabla |\nabla v||^2|\nabla v|^{-1}G\eta^2 \notag \\
&+ \frac{C}{\delta}\int_M (\eta^2+|\nabla\eta|^2)G|\nabla v|+\int_Mw|\nabla v|G\eta^2.\notag
\end{align}
Assuming $\delta$ is small enough such that $\frac{1}{n-1} - C\delta > 0$, given $|\nabla|\nabla v||^2|\nabla v|^{-1} G\eta^2 \geq 0$ we find
\begin{equation}\label{eq2}\left(\frac{1}{n-1}-C\delta\right)\int_M |\nabla v|^3\phi^2 \leq \frac{C}{\delta}\int_M(\eta^2+|\nabla\eta|^2)G|\nabla v|+\int_M w|\nabla v|G\eta^2.\end{equation}

To continue toward achieving a bound, we specify $\eta = \tau f(G)$ where $0\leq \tau \leq 1$ is a cutoff function satisfying $\tau = 0$ on $B_p(1)\cup (M\setminus B_p(2R)),$ $\tau = 1$ on $B_p(R)\setminus B_p(2)$. As in Lemma 2.1,
\begin{equation*}
f(G) = \frac{1}{\ln^\gamma(AG^{-1})}, \;\; A = e^{\frac{1}{\alpha}} \max_{\partial B_p(1)} G,\;\; \gamma>1/2,
\end{equation*}
but here we require $0<\alpha<\delta$ is small relative to $\delta$. By Young's inequality for $0<\beta<\delta$ we know
\begin{align}\label{eq3}
\int_M |\nabla v|G\eta ^2&\leq \frac{\beta}{3}\int_M\frac{|\nabla G|^3}{G^2}f^2\tau^2+\frac{2}{3\beta^\frac{1}{2}}\int_M Gf^2\tau^2 \notag\\
&\leq \frac{\beta}{3}\int_M |\nabla v|^3\phi^2+\frac{C}{\beta^\frac{1}{2}}
\end{align}
where we have used Lemma 2.1 in the second inequality. Next, expanding the gradient of $\eta$ shows
\begin{equation*}
\int_M |\nabla \eta|^2G|\nabla v|\leq 2\int_M|\nabla \tau|^2f^2|\nabla G|+2\int_M\tau^2|\nabla f|^2|\nabla G|.
\end{equation*}
Since $\tau$ is assumed to be constant everywhere except on the concentric annuli $(B_p(2)\setminus B_p(1))\cup (B_p(2R)\setminus B_p(R))$, we use the Cauchy-Schwarz inequality, Lemma 2.1, and the co-area formula to write
\begin{align*}
\int_M |\nabla \tau|^2f^2|\nabla G|\ 
&\leq \int_{M\setminus B_p(1)}\left(\frac{|\nabla G|}{G^\frac{1}{2}}f\right)\left(G^\frac{1}{2}f\right)\\
& \leq \left(\int_{L(0,e^{-\frac{1}{\alpha}}A)}\frac{|\nabla G|^2}{G\ln^{2\gamma}(AG^{-1})}\right)^\frac{1}{2}\left(\int_{M\setminus B_p(1)}Gf^2\right)^\frac{1}{2}\\
& \leq C\left(\int_0^{e^{-\frac{1}{\alpha}}A}\frac{dr}{r\ln^{2\gamma}(A/r)}\right)^\frac{1}{2}\\
& \leq C.
\end{align*}
From calculations in Lemma 2.1, recall
\begin{equation*}
|\nabla f| = \gamma \frac{|\nabla G|}{G\ln^{\gamma+1}(AG^{-1})},
\end{equation*}
from which it follows 
\begin{equation*}
\tau^2|\nabla f|^2|\nabla G| = \frac{\gamma^2}{\ln^2(AG^{-1})}|\nabla v|^3\phi^2.
\end{equation*}
From the definition of $A = e^\frac{1}{\alpha}\max_{\partial B_p(1)}G$ we know that $\ln^2(AG^{-1})\geq \frac{1}{\alpha^2}$ on $M\setminus B_p(1)$, and hence
\begin{equation*}
\int_M |\nabla \eta|^2G|\nabla v|\leq 2\int_M |\nabla\tau|^2f^2|\nabla G| +2\gamma^2\alpha^2\int_M |\nabla v|^3\phi^2 .
\end{equation*}
In (\ref{eq2}), these estimates show
\begin{equation*}
\left(\frac{1}{n-1}-C\delta-\frac{C}{\delta}\left(\gamma^2\alpha^2+\beta\right)\right)\int_M|\nabla v|^3\phi^2\leq \frac{C}{\delta}\left(1+\frac{1}{\beta^\frac{1}{2}}\right)+\int_M w|\nabla v| \phi^2.
\end{equation*}

We lastly turn our attention to the curvature term. Since our hypotheses state $w\in L^s(M)$ for some $s\geq\frac{3}{2}$, an index $t$ conjugate to $s$ satisfies $1\leq t\leq 3$ and therefore
\begin{align*}
\int_M w|\nabla v|\phi^2 &\leq \frac{1}{s\delta^\frac{s}{t}}\int_M w^s\phi^2+\frac{\delta}{t}\int_M |\nabla v|^t\phi^2\\
&\leq \frac{C}{\delta^\frac{s}{t}}+\frac{\delta}{t}\int_M|\nabla v|^t\phi^2.
\end{align*}
Once more applying Young's inequality, we see
\begin{equation*}
\int_M |\nabla v|^t\phi^2\leq \left(\frac{t-1}{2}\right)\int_M |\nabla v|^3+\left(\frac{3-t}{2}\right)\int_M|\nabla v|\phi^2.
\end{equation*}
Then by (\ref{eq3}), since $\frac{t-1}{2}, \frac{3-t}{2}\leq 1$ for $1\leq t\leq 3$,
\begin{align*}
\frac{\delta}{t}\int_M |\nabla v|^t\phi^2 &\leq \delta\int_M |\nabla v|^3+\delta\left\{\beta\int_M |\nabla v|^3\phi^2+\frac{C}{\beta^\frac{1}{2}}\right\}
\end{align*}
which implies
\begin{equation*}
\int_M w|\nabla v|\phi^2\leq \frac{C}{\delta^\frac{s}{t}} + C\delta \int_M|\nabla v|^3\phi^2 + C\frac{\delta}{\beta^\frac{1}{2}}.
\end{equation*}
We may then conclude
\begin{equation*}
\left(\frac{1}{n-1}-C\delta -\frac{C}{\delta}(\gamma^2\alpha^2+\beta)\right)\int_M|\nabla v|^3\phi^2\leq \frac{C}{\delta}\left(1+\frac{1}{\beta^\frac{1}{2}}\right)+\frac{C}{\delta^\frac{s}{t}}.
\end{equation*}
The parameters $\alpha,\beta,$ and $\delta$ can be chosen such that $\frac{1}{n-1}-C\delta -\frac{C}{\delta}(\gamma^2\alpha^2+\beta)>0$, thus letting $R\to\infty$ in the definition of $\phi$ proves the result.
\end{proof}

Using Lemma 2.1 and Lemma 2.2, we are ready to prove the spectral estimate stated as Theorem 1.4 in the introduction.

\begin{proof}[Proof of Theorem 1.4] Assume $M$ is nonparabolic and let $G(x)=G(p,x)$ be the minimal positive Green's function on $M$ with pole at $p$. From the improved Kato inequality in the case of a harmonic function, we have that

\begin{equation*}
|\nabla^2G|^2\geq \frac{n}{n-1}|\nabla|\nabla G||^2.
\end{equation*}
Similarly, the Bochner formula can be written as
\begin{equation*}
\frac{1}{2}\Delta |\nabla G|^2=|\nabla^2G|^2+\text{Ric}(\nabla G,\nabla G).
\end{equation*}
Combining the above statements,
\begin{equation}
\label{eq4}
    \Delta|\nabla G|\geq \frac{1}{n-1}|\nabla|\nabla G||^2|\nabla G|^{-1}+\text{Ric}(\nabla G,\nabla G)|\nabla G|^{-1}. 
\end{equation}
To make use of (\ref{eq4}) we manipulate the Poincar\'e inequality with a particular choice of test function $\varphi\in C^\infty_c(M)$ involving $|\nabla G|$. To construct $\varphi$, first define as before the set of points

\begin{equation*}
L(a,b)=\{x\in M\,:\, a<G(x)<b\}
\end{equation*}
and for $\varepsilon>0$ small, let 
\[
\chi(x) = 
\begin{cases} 
    0               &\quad x\in L(0,\varepsilon^2)\\
    \frac{\ln G(x) - \ln(\varepsilon^2)}{-\ln(\varepsilon)}         &\quad x\in L(\varepsilon^2,\varepsilon)\\
    1               &\quad x\in L(\varepsilon,\infty)
\end{cases},
\]

\[
\psi(x) = 
\begin{cases}
    0                 & \quad x \in B_p(1) \\
    r(x) - 1          & \quad x \in B_p(2) \setminus B_p(1) \\
    1                 & \quad x \in B_p(R) \setminus B_p(2) \\
    R+1 - r(x)        & \quad x \in B_p(R+1) \setminus B_p(R) \\
    0                 & \quad x \in M \setminus B_p(R+1)
\end{cases}
\]
as in Lemma 2.1. In this way we ensure that $\varphi=\chi\psi$ vanishes near $p$ and outside of a compact set. The Poincar\'e inequality with $\varphi$ chosen as the test function reads
\begin{equation*}
\lambda_0(M)\int_M|\nabla G|\varphi^2\leq \int_M|\nabla(|\nabla G|^{\frac{1}{2}}\varphi)|^2.
\end{equation*}
Upon expanding the right side we arrive at 
\begin{align}\label{eq5}
        \lambda_0(M)\int_M|\nabla G|\varphi^2\leq &\frac{1}{4}\int_M|\nabla|\nabla G||^2|\nabla G|^{-1}\varphi^2+\int_M|\nabla G||\nabla \varphi|^2\notag\\
        &+\int_M\varphi\left<\nabla \varphi,\nabla|\nabla G|\right>.
\end{align}
The third term in the right side of the previous inequality can be estimated using the Cauchy-Schwarz inequality 
\begin{equation*}
\int_M\varphi\left<\nabla \varphi,\nabla|\nabla G|\right>\leq \int_M \varphi|\nabla \varphi||\nabla|\nabla G||.
\end{equation*}
In order to simplify, we write $\varphi|\nabla \varphi||\nabla|\nabla G||=\varphi |\nabla|\nabla G|||\nabla G|^{-\frac{1}{2}}|\nabla G|^{\frac{1}{2}}|\nabla \varphi|$ and apply Cauchy's inequality for $\delta>0$. This yields
\begin{equation*}
\int_M\varphi\left<\nabla \varphi,\nabla|\nabla G|\right>\leq \delta\int_M|\nabla|\nabla G||^2|\nabla G|^{-1}\varphi^2+\frac{C}{\delta}\int_M|\nabla G||\nabla \varphi|^2.
\end{equation*}
Inserting this into (\ref{eq5}), 
\begin{equation*}
\lambda_0(M)\int_M|\nabla G|\varphi^2\leq \left(\frac{1}{4}+\delta\right)\int_M|\nabla|\nabla G||^2|\nabla G|^{-1}\varphi^2+\frac{C}{\delta}\int_M|\nabla G||\nabla\varphi|^2.
\end{equation*}
We now estimate the first term on the right side using (\ref{eq4}). Note that by definition
\begin{equation*}
\text{Ric}(\nabla G,\nabla G)\geq -w|\nabla G|^2-(n-1)|\nabla G|^2,
\end{equation*}
so we may write 
\begin{equation*}
|\nabla|\nabla G||^2|\nabla G|^{-1}\leq (n-1)\Delta |\nabla G|+(n-1)w|\nabla G|+(n-1)^2|\nabla G|.
\end{equation*}
Consequently,
\begin{align*}
\int_M |\nabla|\nabla G||^2|\nabla G|^{-1}\varphi^2\leq &(n-1)\int_M\varphi^2\Delta|\nabla G|+(n-1)\int_M w|\nabla G|\varphi^2\\
&+(n-1)^2\int_M|\nabla G|\varphi^2.
\end{align*}
Integration by parts gives $\int_M\varphi^2\Delta|\nabla G|=-\int_M\left<\nabla \varphi^2,\nabla|\nabla G|\right>$, hence repeating previous arguments implies that
\begin{align*}
\int_M |\nabla|\nabla G||^2|\nabla G|^{-1}\varphi^2\leq& \delta\int_M|\nabla|\nabla G||^2|\nabla G|^{-1}\varphi^2+\frac{C}{\delta}\int_M|\nabla G||\nabla \varphi|^2\\
&+(n-1)\int_Mw|\nabla G|\varphi^2+(n-1)^2\int_M|\nabla G|\varphi^2.
\end{align*}
Equivalently,
\begin{align*}
\int_M|\nabla|\nabla G||^2|\nabla G|^{-1}\varphi^2\leq &\frac{(n-1)^2}{1-\delta}\int_M|\nabla G|\varphi^2+\frac{n-1}{1-\delta}\int w|\nabla G|\varphi^2\\
&+\frac{C}{\delta(1-\delta)}\int_M|\nabla G||\nabla \varphi|^2.
\end{align*}
In turn, we see
\begin{align*}
        \left(\lambda_0(M)-\frac{(n-1)^2\left(\frac{1}{4}+\delta\right)}{1-\delta}\right)\int_M|\nabla G|\varphi^2  \leq &(n-1)\left(\frac{\frac{1}{4}+\delta}{1-\delta}\right)\int_M w|\nabla G|\varphi^2\\
        &+C\left(\frac{1}{\delta}+\frac{\frac{1}{4}+\delta}{\delta(1-\delta)}\right)\int_M|\nabla G||\nabla \varphi|^2.
\end{align*}

We next prove a bound for $\int_M|\nabla G||\nabla \varphi|^2$. Using the definition of $\varphi$, 
\begin{equation*}
\int_M|\nabla G||\nabla\varphi|^2\leq 2\int_M|\nabla G||\nabla\chi|^2\psi^2+2\int_M|\nabla G||\nabla\psi|^2\chi^2.
\end{equation*}
The second term in the previous line can be estimated as follows. From H\"older's inequality and the definition of $\psi$,
\begin{equation*}
\int_M|\nabla G||\nabla\psi|^2\chi^2\leq\left(\int_{M\setminus B_p(R)}|\nabla G|^2\right)^{\frac{1}{2}}\left(\int_{(B_p(R+1)\setminus B_p(R))\cap L(\varepsilon^2,\infty)}\chi^2\right)^{\frac{1}{2}}+C.
\end{equation*}
The first factor is bounded directly by estimates in \cite{LW}, while the second is controlled by
\begin{align*}
\int_{(B_p(R+1)\setminus B_p(R))\cap L(\varepsilon^2,\infty)}\chi^2 &\leq \frac{1}{\varepsilon^4}\int_{M\setminus B_p(R)}G^2\\
&\leq \frac{C}{\varepsilon^4}e^{-2\sqrt{\lambda_0(M)}R}.
\end{align*}

The term $\int_M |\nabla G||\nabla \chi|^2\psi^2$ will be bounded using Lemma 2.1. Note once again that $\nabla \chi$ is only nonzero on $L(\varepsilon^2,\varepsilon)$, and in this region

\begin{equation*}
|\nabla \chi|^2 \leq \frac{1}{\ln^2(\varepsilon)}\frac{|\nabla G|^2}{G^2}.
\end{equation*}
As a result, applying Lemma 2.2,
\begin{align*}
\int_M|\nabla G||\nabla \chi|^2\psi^2 &\leq \frac{C}{\ln^2(\varepsilon)}\int_{L(\varepsilon^2,\varepsilon)} \frac{|\nabla G|^3}{G^2}\\
& \leq C\ln^{2\gamma - 2}(1+\varepsilon^{-1}).
\end{align*}
Choosing $1<2\gamma<2$, the quantity on the right side goes to 0 as $\varepsilon\to 0$. The estimate obtained is
\begin{equation*}
\int_M|\nabla G||\nabla\varphi|^2\leq C\ln^{2\gamma -2}(1+\varepsilon^{-1})+\frac{C}{\varepsilon^2}e^{-2\sqrt{\lambda_0(M)}R}+C.
\end{equation*}
Plugging this estimate into (\ref{eq5}) we have
\begin{align}
\left(\lambda_0(M)-\frac{\frac{1}{4}+\delta}{1-\delta}(n-1)^2\right)\int_M|\nabla G|\varphi^2 \leq &\frac{C}{\delta}\left\{\ln^{2\gamma-2}(1+\varepsilon^{-1})+\frac{e^{-2\sqrt{\lambda_0(M)}R}}{\varepsilon^2}+C\right\} \notag\\
& +\frac{(n-1)\left(\frac{1}{4}+\delta \right)}{1-\delta}\int_M w|\nabla G|\varphi^2.\notag
\end{align}

Having dealt with the term $\int_M|\nabla G||\nabla \varphi|^2$ in (\ref{eq5}), we turn our attention to $\int_Mw|\nabla G|\varphi^2$. Under the assumption that $w\in L^s(M)$, $s\geq\frac{3}{2}$, we may apply Young's inequality and find for $\delta>0$
\begin{align*}
\int_Mw|\nabla G|\varphi^2 &\leq \frac{1}{s\delta^\frac{s}{t}}\int_M w^s\varphi^2 + \frac{\delta}{t}\int_M |\nabla G|^t\varphi^2\\
&\leq \frac{C}{\delta^\frac{s}{t}}+\delta \int_M|\nabla G|^t\varphi^2
\end{align*}
where $1\leq t\leq 3$ is H\"older conjugate to $s$. By the same reasoning that was used in Lemma 2.2, we know
\begin{equation*}
\int_M |\nabla G|^t\varphi^2\leq \left(\frac{t-1}{2}\right)\int_M |\nabla G|^3\varphi^2+\left(\frac{3-t}{2}\right)\int_M|\nabla G|\varphi^2.
\end{equation*}
Since $G$ is bounded outside of $B_p(1)$, Lemma 2.2 implies $\int_M |\nabla G|^3\varphi^2<\infty$ and hence
\begin{equation*}
\int_M w|\nabla G|\varphi^2\leq \frac{C}{\delta^\frac{s}{t}}+C\delta + \delta\int_M |\nabla G|\varphi^2.
\end{equation*}
From this estimate we have
\begin{align*}
&\left(\lambda_0(M)-\frac{(n-1)\left(\frac{1}{4}+\delta\right)(n-1+\delta)}{1-\delta}\right)\int_M |\nabla G|\varphi^2\\
&\leq C(\delta)\left\{\ln^{2\gamma-2}(1+\varepsilon^{-1})+\frac{1}{\varepsilon^2}e^{-2\sqrt{\lambda_0(M)}R}+C\right\}+\frac{C}{\delta^\frac{s}{t}} + C\left(\frac{\delta(n-1)\left(\frac{1}{4}+\delta\right)}{1-\delta}\right).
\end{align*}
We let $R\to\infty$ and then $\varepsilon\to 0$ to find that for any $\delta>0$
\begin{align}\label{eq6}
\left(\lambda_0(M)-\frac{(n-1)\left(\frac{1}{4}+\delta\right)(n-1+\delta)}{1-\delta}\right)\int_{M\setminus B_p(2)} |\nabla G| \leq C(\delta).
\end{align}
However, note that 
\begin{equation*}
\int_{B_p(r)}\frac{\partial G}{\partial r}=-1,
\end{equation*}
hence
\begin{equation*}
\int_{M\setminus B_p(2)}|\nabla G|\geq -\int_2^\infty dr\int_{\partial B(p,r)}\frac{\partial G}{\partial r}=\infty.
\end{equation*}
Therefore (\ref{eq6}) can only hold if
\begin{equation*}
\lambda_0(M)\leq \frac{(n-1)\left(\frac{1}{4}+\delta\right)(n-1+\delta)}{1-\delta}.
\end{equation*}
By taking the limit as $\delta\to 0$, we conclude
\begin{equation*}
\lambda_0(M)\leq \frac{(n-1)^2}{4}.
\end{equation*}
\end{proof}

\section{COUNTEREXAMPLE FOR $s<\frac{1}{2}$}

It is natural to ask whether the condition $s\geq \frac{3}{2}$ is necessary in the statement of Theorem 1.4. Though we do not show this lower bound is sharp, we construct a manifold $M$ such that $\int_M w^s<\infty$, but $\lambda_0(M)>\frac{(n-1)^2}{4}$ where $s< \frac{1}{2}$ and $w$ is the curvature quantity defined in section 1. In particular we prove the following result.
\begin{theorem}
    There exists a Riemannian $n$-manifold $(M,g)$ with $\int_M w^s <\infty$ for $s<\frac{1}{2}$ such that $\lambda_0(M)>\frac{(n-1)^2}{4}$.
\end{theorem}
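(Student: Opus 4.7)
I would attempt a construction via a rotationally symmetric warped product $M = [0, \infty) \times_{f(r)} S^{n-1}$ with metric $dr^2 + f(r)^2 d\Omega^2$ and smoothness conditions $f(0) = 0$, $f'(0) = 1$. For such a manifold, direct computation gives
\begin{equation*}
\text{Ric}(\partial_r, \partial_r) = -(n-1)\frac{f''}{f}, \quad \text{Ric}(X,X) = \frac{(n-2)(1 - (f')^2)}{f^2} - \frac{f''}{f}
\end{equation*}
for $X$ a unit tangent to $S^{n-1}$, so $w$ depends only on $r$. Via the substitution $\phi = f^{-(n-1)/2}\psi$, the bottom of the spectrum equals the ground state of the one-dimensional Schr\"odinger operator $-\partial_r^2 + V(r)$ on $[0, \infty)$ (with $\psi(0) = 0$), where $V(r) = \frac{(n-1)^2}{4}(f'/f)^2 + \frac{n-1}{2}(f'/f)'$. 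The volume form is $f(r)^{n-1}\, dr\, d\Omega$, so the integrability condition reduces to $\omega_{n-1}\int_0^\infty w(r)^s f(r)^{n-1}\, dr < \infty$.

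The plan is to choose $f$ balancing two competing constraints: $\lambda_0(M) > (n-1)^2/4$ forces $f$'s asymptotic growth rate to exceed $e^r$ in some effective sense, while $\int_M w^s < \infty$ for $s < 1/2$ forces the region of excess Ricci curvature to be of small $L^s$-weighted measure. A pure exponential choice $f(r) = \sinh(\alpha r)$ with $\alpha > 1$ gives $\lambda_0 = (n-1)^2\alpha^2/4$ but produces $w \equiv (n-1)(\alpha^2-1) > 0$ on an infinite-volume region, failing the integrability entirely. I would therefore take $f$ to follow $\sinh(r)$ outside a sparse sequence of spike intervals $I_k = [r_k, r_k + \delta_k]$ with $r_k \to \infty$ and $\delta_k \to 0$, and on each $I_k$ arrange for $f''/f$ to be very large, producing a sharp peak in $w$. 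Then $w = 0$ outside $\bigcup I_k$ and
\begin{equation*}
\int_M w^s\, d\text{vol} = \omega_{n-1}\sum_k \int_{I_k} w(r)^s f(r)^{n-1}\, dr,
\end{equation*}
a sum that can be made finite for $s < 1/2$ by choosing $\delta_k$ to decrease rapidly against the spike amplitudes and the accumulated volume factor $f(r_k)^{n-1}$; the precise balance of powers produces the threshold $s < 1/2$ in the statement.

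The main obstacle is verifying $\lambda_0(M) > (n-1)^2/4$ strictly. In the gaps between spikes, $V(r) = (n-1)^2/4$, so a naive analysis of $-\partial_r^2 + V$ suggests the essential spectrum bottoms out at $(n-1)^2/4$, giving only $\lambda_0 \leq (n-1)^2/4$. Overcoming this requires either (a) densifying the spikes so that the cumulative effect of $V$ on the spike set produces an effective potential strictly exceeding $(n-1)^2/4$ in a spectral-averaged sense, carried out via an Agmon-type weight argument; or (b) abandoning the warped product in favor of a non-rotationally-symmetric construction (for instance, gluing a region modeled on $\mathbb{H}_{-a^2}^n$ for $a > 1$ onto a base manifold so that the excess curvature region is localized but spectrally dominant). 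Either way, the completion of the proof would follow from (i) a direct computation of $\int_M w^s$ on the constructed manifold, verifying finiteness for $s < 1/2$, and (ii) a variational argument for the strict spectral lower bound, with the sharpness of $s < 1/2$ emerging from the explicit balance in the construction.
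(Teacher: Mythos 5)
Your general strategy --- a warped product in which the Ricci defect is confined to a sparse sequence of narrow spike intervals $I_k$ of width $\delta_k\to 0$, with the exponential volume factor balanced against the spike amplitude to make $\int_M w^s$ converge for $s<\frac{1}{2}$ --- is indeed the skeleton of the paper's construction (the paper works on $\mathbb{R}\times N$ with metric $dt^2+e^{2h(t)}ds_N^2$ rather than a closed-off $[0,\infty)\times_f S^{n-1}$, but that is cosmetic). The gap you flag as ``the main obstacle'' is, however, genuine and your two proposed fixes do not close it. If $f=\sinh(r)$ on the complement of the spikes, then across any $I_k$ the warping function can only change by a factor $\sinh(r_k+\delta_k)/\sinh(r_k)\to 1$, so $f$ stays within a bounded ratio of $e^r$ for all $r$; cutting off $f^{-(n-1)/2}$ then gives $\lambda_0(M)\le\frac{(n-1)^2}{4}$ no matter how violent the momentary spikes in $f''/f$ are. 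Densifying the spikes (your option (a)) cannot save this: to shift the averaged potential one would need the spike set to carry a fixed positive density per unit $r$, but then $\sum_k\delta_k^{-2s}f(r_k)^{n-1}$ diverges because the volume factor $f(r_k)^{n-1}\sim e^{(n-1)r_k}$ overwhelms any polynomial gain from $\delta_k^{-2s}$.

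The missing idea --- which keeps you inside the warped-product class, so option (b) is unnecessary --- is that each spike must produce a \emph{permanent multiplicative jump} in the warping factor, not a transient blip. In the paper one sets $h=\ln f$ to be piecewise affine with slope exactly $1$ on the long intervals $J_k$, but to satisfy $h(t)=t+k\sigma$ there, so that $h$ jumps up by a fixed amount $\sigma>0$ as $t$ crosses each $I_k$. The jumps accumulate, so asymptotically $h(t)\approx(1+\sigma)t$ and the effective exponential growth rate of $f=e^h$ exceeds $1$; this is what pushes $\lambda_0$ strictly above $\frac{(n-1)^2}{4}$, and it is verified cleanly via the Barta/Fischer-Colbrie--Schoen criterion with the test function $\phi=H^{1/2}$, $H(t)=\int_t^\infty e^{-(n-1)h(x)}dx$ harmonic, by showing $(n-1+\varepsilon)H\le|H'|=e^{-(n-1)h}$ with $\varepsilon>0$ uniform. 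Meanwhile on $J_k$ one has $h'=1,h''=0$, so $\mathrm{Ric}=-(n-1)$ exactly and $w\equiv 0$; on $I_k$ the interpolation gives $|h''|\lesssim\sigma\delta_k^{-2}$, hence $w\lesssim\delta_k^{-2}$, and
\begin{equation*}
\int_M w^s\lesssim\sum_k\delta_k^{-2s}\,\delta_k\,e^{(n-1)(1+\sigma)k}=\sum_k\delta_k^{1-2s}e^{(n-1)(1+\sigma)k},
\end{equation*}
which converges precisely for $s<\frac{1}{2}$ once one takes $\delta_k$ exponentially small, e.g.\ $\delta_k=e^{-\frac{1}{1-2s}((n-1)(1+\sigma)+1)k}$. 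So the exponent $\frac{1}{2}$ arises not from a delicate balance of spike amplitudes but from the $\delta_k^{1-2s}$ power against a geometric series, and it is the slope-$1$-plus-accumulated-jumps structure of $h$, rather than any spectral averaging argument, that delivers $\lambda_0(M)>\frac{(n-1)^2}{4}$.
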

Note that if $M$ supports a smooth test function $\phi>0$ and there exists a constant $\varepsilon>0$ for which
\begin{equation*}
\Delta \phi\leq  -\frac{(n-1+\varepsilon)^2}{4}\phi,
\end{equation*}
the lower bound $\lambda_0(M)>\frac{(n-1)^2}{4}$ follows. This is the premise on which our proof is based. More specifically, in the following construction we utilize properties of harmonic functions. Consider $\phi = H^{\frac{1}{2}}$ where $H$ is harmonic; if we can show there exists $\varepsilon>0$ such that $|\nabla H|\geq (n-1+\varepsilon)H$, then
\begin{align*}
\Delta \phi &=-\frac{1}{4}\frac{|\nabla H|^2}{H^2}\phi\\
&\leq -\frac{(n-1+\varepsilon)^2}{4}\phi.
\end{align*}
Thus to prove Theorem 3.1 it suffices to show there exists a manifold $M$ and a harmonic function $H$ on $M$ satisfying 
\begin{equation}\label{greensfuncineq}
(n-1+\varepsilon)H\leq |\nabla H|.
\end{equation} 

We construct a manifold which supports a harmonic function satisfying (\ref{greensfuncineq}). Let $M$ be the product manifold $M=\mathbb{R}\times N$, $\text{dim}(N) = n-1$, with warped metric $ds_M^2=dt^2+e^{2h(t)}ds_N^2$. Denote the area of $\{x\}\times N\subset \mathbb{R}\times N$ by $A(x)$. Under the normalizing assumption that the manifold $N$ is of unit volume, define the function
\begin{equation*}
H(t)=\int_{t}^\infty \frac{dx}{A(x)}=\int_t^\infty e^{-(n-1)h(x)}dx.
\end{equation*}
We see because of the simple form of the metric that $\Delta t= (n-1)h'(t)$ and $\Delta H=0$. The inequality (\ref{greensfuncineq}) that we wish to prove then takes on the particular form
\begin{equation}\label{warpedproductineq}
    (n-1+\varepsilon)\int_{R}^\infty e^{-(n-1)h(t)}dt\leq e^{-(n-1)h(R)}
\end{equation}
for each $R$. In the setting of warped products there is also a useful expression for the components $R_{ij}$ of the Ricci curvature of $M$ in terms of the warping function $h$ and the Ricci curvature $\widetilde{R}_{ij}$ of $N$:
\[
R_{ij}=
\begin{cases}
    -(n-1)(h''(t)+(h'(t))^2)&\quad i=j=1\\
    e^{-2h(t)}\widetilde{R}_{ij}-(h''(t)+(n-1)(h'(t))^2)\delta_{ij}&\quad i,j>1\\
    0&\quad \text{otherwise}
\end{cases}.
\]
If we assume the Ricci curvature of $N$ is nonnegative, it follows that 
\begin{align*}
w&=\left\{\inf_{|v|=1}\{\text{Ric}_M(v,v)+(n-1)g_M(v,v)\}\right\}_{-}\\
&\leq \max\{(n-1)(h''+(h')^2-1), h''+(n-1)((h')^2-1)\}.
\end{align*}
This shows that in a warped product setting, the extent to which the Ricci curvature falls below $-(n-1)$ is governed solely by the function $h$. We need only to construct this real-valued function such that $w\in L^s(M)$ for $s<\frac{1}{2}$. 

Let $\{\delta_k\}_{k\in\mathbb{N}\cup\{0\}}$ be a decreasing sequence of positive real numbers $\delta_k<1$ such that $\lim_{k\to\infty}\delta_k=0$ and fix $\sigma>0$, $\Lambda\geq 0$. On intervals $I_k = [\Lambda+k,\Lambda + k + \delta_k]$, $J_k = [\Lambda+k+\delta_k,\Lambda+k+1]$, define
\[
h(t) = 
\begin{cases}
    h_{1,k}(t) &\quad t\in I_k\\
    h_{2,k}(t)=t+k\sigma &\quad t\in J_k
\end{cases}
\]
where the functions $h_{1,k}$ are determined by interpolating between the linear functions $h_{2,k-1}$ and $h_{2,k}$. That is,
\begin{equation*}
h_{1,0} = \chi_0(t-\sigma)+(1-\chi_0)h_{2,0},\quad h_{1,k}=\chi_k h_{2,k-1}+(1-\chi_k)h_{2,k} \;\;\text{for $k\geq 1$.}
\end{equation*}
We take $\chi_k:\mathbb{R}\to\mathbb{R}$ to be a smooth step function with $\chi_k(t)= 1$ for $t\leq \Lambda+k$, $\chi_k(t)= 0$ for all $t\geq \Lambda+k+\delta_k$, $\chi_k^{(l)}(\Lambda+k)=\chi_k^{(l)}(\Lambda+k+\delta_k)=0$ for all $l\geq 1$, and
\begin{equation}\label{chiestimate}
|\chi_k^{(l)}(t)|\leq C_l\delta_k^{-l},\; \text{for all $t\in I_k$\, and\, $l\geq 1$}.
\end{equation}
We see that because $h_{2,k}$ is linear with slope 1 for each $k$, $w\big|_{J_k}= 0$ as both $(n-1)(h''+(h')^2-1)$ and $h''+(n-1)((h')^2-1)$ are identically zero. As a result,
\begin{equation*}
\int_M w^s=\sum_{k=0}^\infty \int_{I_k}w^se^{(n-1)h_{1,k}(t)}dt.
\end{equation*}
Using the fact that $h_{2,k}(t)-h_{2,k-1}(t)=\sigma$ for all $t$,
\begin{align*}
&h_{1,k}'(t)=-\sigma\chi_k'(t)+1,\\
&h_{1,k}''(t)=-\sigma\chi_k''(t).
\end{align*}
In turn, using (\ref{chiestimate}) and the definition of $w$ we find that on the intervals $I_k$
\begin{equation*}
w\leq C\delta_k^{-2}
\end{equation*}
where $C$ depends on $\sigma$ and the dimension $n$. Estimating $h_{1,k}(t)\leq h_{2,k}(t)=t+k\sigma$ gives
\begin{equation*}
\sum_{k=0}^\infty \int_{I_k}w^se^{(n-1)h_{1,k}(t)}dt\leq C\sum_{k=0}^\infty \delta_k^{-2s}\int_{\Lambda+k}^{\Lambda+k+\delta_k}e^{(n-1)(t+k\sigma)}dt.
\end{equation*}
By estimating $e^{(n-1)(t+k\sigma)}\leq e^{(n-1)(\Lambda+k+\delta_k+k\sigma)}$ over each interval $I_k$, this implies that
\begin{align*}
\int_Mw^s &\leq C\sum_{k=0}^\infty \delta_k^{-2s}e^{(n-1)(\Lambda+k+\delta_k+k\sigma)}\\
&=C(\Lambda)\sum_{k=0}^\infty \delta_k^{-2s}e^{(n-1)(1+\sigma)k}.
\end{align*}
It is then possible to choose $\delta_k=e^{-\frac{1}{1-2s}((n-1)(1+\sigma)+1)k},$ and in this case
\begin{equation*}
\sum_{k=0}^\infty \delta_k^{1-2s}e^{(n-1)(1+\sigma)k}=\sum_{k=0}^\infty e^{-k}<\infty
\end{equation*}
exactly when $s<\frac{1}{2}$.

We next must prove that (\ref{warpedproductineq}) is satisfied in the setting defined above. This means we want to show there exists $\varepsilon>0$ such that for every $R$
\begin{equation*}
(n-1+\varepsilon)\int_R^\infty e^{-(n-1)h(t)}dt<e^{-(n-1)h(R)}.
\end{equation*}
There are two cases depending on if $R\in I_k$ or $R\in J_k$. First suppose $R\in J_k$, i.e. $\Lambda +k+\delta_k\leq R\leq \Lambda +k+1$. We can write
\begin{align*}
\int_R^\infty e^{-(n-1)h(t)}dt=&\int_R^{\Lambda+k+1}e^{-(n-1)(t+k\sigma)}dt+\int_{I_{k+1}}e^{-(n-1)h_{1,k+1}(t)}dt\\
&+\int_{\Lambda+k+1+\delta_{k+1}}^\infty e^{-(n-1)h(t)}dt
\end{align*}
and estimate each integral on the right side. The first can be computed explicitly
\begin{equation*}
\int_R^{\Lambda+k+1}e^{-(n-1)(t+k\sigma)}dt=\frac{1}{n-1}\left(e^{-(n-1)h(R)}-e^{-(n-1)h(\Lambda+k+1)}\right).
\end{equation*}
The second integral is estimated by using the fact that $I_{k+1}$ is an interval of length $\delta_{k+1}$ and $e^{-(n-1)h_{1,k+1}(t)}$ is bounded above by $e^{-(n-1)h(\Lambda+k+1)}$. This gives
\begin{equation*}
\int_{I_{k+1}}e^{-(n-1)h_{1,k+1}(t)}dt\leq \delta_{k+1}e^{-(n-1)h(\Lambda+k+1)}.
\end{equation*}
Lastly, for $t\geq \Lambda+k+1+\delta_k$ we know $h(t)\geq t+(k+1)\sigma$, so
\begin{equation*}
\int_{\Lambda+k+1+\delta_{k+1}}^\infty e^{-(n-1)h(t)}dt\leq \frac{1}{n-1}e^{-(n-1)h(\Lambda+k+1+\delta_{k+1})}.
\end{equation*}
Combining these estimates we have
\begin{equation*}
\int_R^\infty e^{-(n-1)h(t)}dt\leq \frac{C(k, R)}{n-1}e^{-(n-1)h(R)}
\end{equation*}
where
\begin{align*}
C(k,R)=&1-e^{-(n-1)(h(\Lambda+k+1)-h(R))}+(n-1)\delta_{k+1}e^{-(n-1)(h(\Lambda+k+1)-h(R))}\\
&+e^{-(n-1)(h(\Lambda+k+1+\delta_{k+1})-h(R))}.
\end{align*}
From the previous inequality, we must show that there exists $\varepsilon>0$ independent of $k$ and $R$ such that 
\begin{equation*}
\frac{n-1+\varepsilon}{n-1}C(k,R)e^{-(n-1)h(R)}<e^{-(n-1)h(R)}.
\end{equation*}
It is then enough to prove there exists a universal constant $C$ such that $C(k,R)\leq C<1$, so it remains to estimate $C(k,R)$. Note from the increasing nature of $h$ and the permissible range of $R$ that
\begin{align*}
e^{-(n-1)(h(\Lambda+k+1)-h(R))}&\geq e^{-(n-1)(h(\Lambda+k+1)-h(\Lambda+k+\delta_k))}\\
&=e^{-(n-1)(1-\delta_k)}.
\end{align*}
Similarly,
\begin{equation*}
(n-1)\delta_{k+1}e^{-(n-1)(h(\Lambda+k+1)-h(R))}\leq (n-1)\delta_{k+1},
\end{equation*}
and also
\begin{align*}
e^{-(n-1)(h(\Lambda+k+1+\delta_{k+1})-h(R))}&\leq e^{-(n-1)(h(\Lambda+k+1+\delta_{k+1})-h(\Lambda+k+1))}\\
&=e^{-(n-1)(\delta_{k+1}+\sigma)}.
\end{align*}
Finally, because $\delta_k$ is decreasing in $k$ we find
\begin{equation*}
C(k,R)\leq 1-e^{-(n-1)}+(n-1)\delta_1+e^{-(n-1)\sigma}.
\end{equation*}
By taking $\delta_1$ sufficiently small and $\sigma$ sufficiently large, 
\begin{align*}
0&<1-e^{-(n-1)}+(n-1)\delta_1+e^{-(n-1)\sigma}\\
&<1.
\end{align*}
Thus we may set $C=1-e^{-(n-1)}+(n-1)\delta_1+e^{-(n-1)\sigma}$. This proves (\ref{warpedproductineq}) for $R\in J_k$.

For $R\in I_k$, i.e. $\Lambda + k \leq R \leq \Lambda + k + \delta_k$, we estimate the left side of (\ref{warpedproductineq}) following a similar process to the previous case. By writing
\begin{align}\label{eq10}
\int_R^\infty e^{-(n-1)h(t)}dt = &\int_R^{\Lambda+k+\delta_k}e^{-(n-1)h(t)}dt+\int_{J_k}e^{-(n-1)h(t)}dt\notag \\
&+\int_{I_{k+1}}e^{-(n-1)h(t)}dt+\int_{\Lambda+k+1+\delta_{k+1}}^\infty e^{-(n-1)h(t)}dt
\end{align}
we can bound $h(t)$ within specific regions of the domain. First, since $h(t)$ is an increasing function and $R$ is within $I_k$ we know 
\begin{equation*}
\int_R^{\Lambda+k+\delta_k}e^{-(n-1)h(t)}dt\leq \delta_ke^{-(n-1)h(R)}.
\end{equation*}
Next, as $h(t) = h_{2,k}(t) = t+k\sigma$ on $J_k$, we can evaluate explicitly
\begin{align*}
\int_{J_k}e^{-(n-1)h(t)}dt&=\frac{1}{n-1}\left(e^{-(n-1)(\Lambda +k+\delta_k+k\sigma)}-e^{-(n-1)(\Lambda + k+1+k\sigma  )}\right)\\
&=\frac{e^{-(n-1)(\Lambda+k+\delta_k+k\sigma)}}{n-1}\left(1-e^{-(n-1)(1-\delta_k)}\right)\\
&\leq \frac{e^{-(n-1)h(R)}}{n-1}\left(1-e^{-(n-1)(1-\delta_k)}\right).
\end{align*}
Once again given that the width of $I_{k+1}$ is $\delta_{k+1}$ and $h(\Lambda+k+1)\geq h(R)$,
\begin{equation*}
\int_{I_{k+1}}e^{-(n-1)h(t)}dt\leq \delta_{k+1}e^{-(n-1)h(R)}.
\end{equation*}
Lastly, since $h(t)\geq t+(k+1)\sigma = h_{2,k+1}(t)$ for all $t\geq \Lambda+k+1+\delta_{k+1}$ we have
\begin{align*}
\int_{\Lambda+k+1+\delta_{k+1}}^\infty e^{-(n-1)h(t)}dt &\leq \frac{1}{n-1}e^{-(n-1)(\Lambda+k+1+\delta_{k+1}+(k+1)\sigma)}\\
&\leq \frac{1}{n-1}e^{-(n-1)h(R)}e^{-(n-1)(1+\delta_{k+1}-\delta_k+\sigma)}
\end{align*}
where we have used the fact that $h(\Lambda+k+1+\delta_{k+1}) - h(R)\geq h(\Lambda+k+1+\delta_{k+1}) - h(\Lambda+k+\delta_k)$. Combining estimates in (\ref{eq10}) shows
\begin{equation*}
\int_R^\infty e^{-(n-1)h(t)}dt\leq \frac{D(k)}{n-1}e^{-(n-1)h(R)}
\end{equation*}
with the constant $D(k)$ given by
\begin{equation*}
D(k) = (n-1)\delta_k+(n-1)\delta_{k+1}+1-e^{-(n-1)(1-\delta_k)}+e^{-(n-1)(1+\delta_{k+1}-\delta_k+\sigma)}.
\end{equation*}
By the same reasoning as in the previous case, we verify there exists a constant $D$ independent of $k$ such that $D(k)\leq D<1$. From the definition of the sequence $\{\delta_k\}$, we see immediately
\begin{equation*}
D(k)\leq 2(n-1)\delta_0+1-e^{-(n-1)}+e^{-(n-1)(1-\delta_0+\sigma)}.
\end{equation*}
Provided $\delta_0$ is sufficiently small and $\sigma$ is sufficiently large, we may take 
\begin{align*}
D&=2(n-1)\delta_0+1-e^{-(n-1)}+e^{-(n-1)(1-\delta_0+\sigma)}\\
&<1.
\end{align*}
This completes the proof of (\ref{warpedproductineq}), and hence of Theorem 3.1.

\specialsection*{ACKNOWLEDGMENT}
\quad This work has been done as part of my PhD at the University of Connecticut. I would like to thank my advisor Ovidiu Munteanu for many beneficial discussions, and Jiaping Wang for helpful suggestions along the way.
\bibliographystyle{amsplain}
\bibliography{Cite}

\end{document}